\newtheorem{thm}{Theorem}
\newtheorem{lemma}[thm]{Lemma}
\newtheorem{prop}[thm]{Proposition}
\newtheorem{cor}[thm]{Corollary}
\newtheorem*{remark}{Remark}
\crefname{thm}{Theorem}{Theorems}
\crefname{lemma}{Lemma}{Lemmas}
\crefname{prop}{Proposition}{Propositions}
\crefname{cor}{Corollary}{Corollaries}
\crefname{section}{Section}{Sections}
\crefname{figure}{Figure}{Figures}
\crefname{remark}{Remark}{Remarks}
\newcommand{\df}{\textbf}
\newcommand{\Z}{\mathbb{Z}}
\newcommand{\R}{\mathbb{R}}
\renewcommand{\P}{\mathbb{P}}
\newcommand{\un}{$\mbox{\rm Unif}[0,1]$}
\title[Symmetrization for finitely dependent colouring]{Symmetrization \\ for finitely dependent colouring}
\author{Alexander E.~Holroyd}
\thanks{Funded in part by a Royal Society Wolfson Fellowship}
\address{University of Bristol, U.K.}
\email{a.e.holroyd@bristol.ac.uk}
\keywords{Proper coloring, finite dependence, invariant process, finitary factor}
\subjclass[2010]{60G10; 05C15; 60C05}
\date{10 May 2023}
\begin{document}
\maketitle
\begin{abstract}
  We prove the existence of a finitely dependent proper colouring of the integer lattice $\Z^d$ that is fully isometry-invariant in law, for all dimensions~$d$.  Previously this was known only for $d=1$, while only translation-invariant examples were known for higher $d$.  Moreover we show that four colours suffice, and that the colouring can be expressed as an isometry-equivariant finitary factor of an i.i.d.\ process, with exponential tail decay on the coding radius.  Our construction starts from known translation-invariant colourings and applies a symmetrization technique of possible broader utility.
\end{abstract}

\section{Introduction}
\label{intro}

We are concerned with finitely dependent colourings.
A stochastic process $X=(X(v):v\in\Z^d)$ is \df{$k$-dependent} if the families $(X(v):v\in A)$ and $(X(v):v\in B)$ are independent of each other whenever the sets of sites $A,B\subseteq\Z^d$ are at distance greater than $k$ from each other (with respect to the $1$-norm), and $X$ is \df{finitely dependent} if it is $k$-dependent for some finite~$k$.
A process $X$ is a \df{$q$-colouring} if each $X(v)$ takes values in the set of colours $[q]:=\{1,2,\ldots,q\}$, while almost surely $X(u)\neq X(v)$ for any nearest neighbours $u,v\in\Z^d$; it is a \df{colouring} if it is a $q$-colouring for some finite $q$.  A process is \df{translation-invariant} if its law is invariant under all translations of $\Z^d$; \df{isometry-invariance} is defined analogously.

It was recently established in \cite{hl} that there exist translation-invariant finitely dependent colourings of $\Z^d$ for all $d$.  This fact is striking in several ways, even in dimension $d=1$.  In particular, the most obvious way to construct a finitely dependent process is as a \df{block factor} (that is, a finite-range function commuting with translations) of an i.i.d.\ family. (We give full detailed definitions later.)  Regardless of colouring, for some time it was not known whether \emph{every} translation-invariant finitely dependent process can be expressed as a block factor.  This question was considered as early as \cite{ibragimov-linnik}, while the negative answer was proved much later in \cite{burton-goulet-meester}.  See \cite{hl} for further historical information.  On the other hand, Ramsey-theoretic arguments imply that there is no block factor colouring (see \cite{hsw} or \cite{naor}).  Hence the result of \cite{hl} shows that \emph{colouring} distinguishes between block factors and translation-invariant finitely dependent processes.  (In fact, as also shown in \cite{hl}, in $d=1$ the same holds if colouring is replaced with any shift of finite type that is non-trivial in a certain sense.)

At the core of the proof in \cite{hl} is a short but mysterious construction of a finitely dependent colouring %(in fact a $1$-dependent $4$-colouring)
of the line $\Z$ (i.e.\ $\Z^d$ in dimension $d=1$).  This construction is in a sense the only one of its kind known, with all further developments on the topic employing either generalizations that build on it (e.g.\ \cite{hhl,hl2,levy,star,hhl-cycle}, or embellishments that treat it as a black box (e.g.\ \cite{hl,fin}).  As a particular instance of the latter, given a translation-invariant finitely dependent colouring of the line, it is straightforward to construct one of $\Z^d$ from it, as we explain in \cref{prelim}.  The colouring of the line in \cite{hl} is in fact invariant in law under reflections as well as translations of $\Z$, and thus isometry-invariant.  Surprisingly however, it does not appear straightforward to carry isometry-invariance into higher dimensions, and existence of an isometry-invariant finitely dependent colouring has been open for $d\geq 2$.  It is this question that we address in the main theorem below.

The colouring that we will exhibit has the further property that it admits a well-behaved probabilistic construction in the following sense.  A \df{finitary factor} is a map from one process to another, commuting with translations, in which the output variable at a given location can be determined from the input variables within a finite, but random and perhaps unbounded distance, called the \df{coding radius}.  (Again, detailed definitions appear below).   The factor is \df{isometry-equivariant} if it commutes with all isometries of $\Z^d$.

\begin{thm}\label{main}
  For any $d\geq 2$ there exists an isometry-invariant finitely dependent colouring of $\Z^d$.  Moreover, $4$ colours suffice, and the colouring can be expressed as an isometry-equivariant finitary factor of an i.i.d.\ process, in which the coding radius has exponential tail.
\end{thm}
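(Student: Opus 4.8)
The plan is to begin from the one-dimensional colouring and symmetrize away its failure of rotational symmetry. By \cite{hl,fin} there is a $4$-colouring of $\Z$ that is isometry-invariant, finitely dependent, and an isometry-equivariant finitary factor of an i.i.d.\ process with exponentially decaying coding radius; the reduction of \cref{prelim} turns this into a finitely dependent colouring $X_0$ of $\Z^d$ that is translation-invariant and is a finitary factor of i.i.d.\ with exponential coding radius, but which in general fails to be invariant under the coordinate reflections and permutations, because the construction singles out the standard axes of $\Z^d$. It therefore remains to symmetrize $X_0$ over the finite point group $B_d$ (the hyperoctahedral group of signed coordinate permutations) while preserving finite dependence, the colour count, and the finitary-factor property.

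The naive symmetrization — applying a single uniformly random element $g\in B_d$ to $X_0$ — does yield an isometry-invariant colouring, but it is not finitely dependent: for sets $A,B$ at large distance and colour patterns $a,b$ one has $\frac{1}{|B_d|}\sum_g\P(gX_0|_A=a,\,gX_0|_B=b)=\frac{1}{|B_d|}\sum_g\P(gX_0|_A=a)\,\P(gX_0|_B=b)$ by finite dependence of each $gX_0$, and this generically differs from the product of marginals $\bigl(\tfrac{1}{|B_d|}\sum_g\P(gX_0|_A=a)\bigr)\bigl(\tfrac{1}{|B_d|}\sum_g\P(gX_0|_B=b)\bigr)$. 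The symmetrization must hence be carried out locally and consistently. I would build, as an isometry-equivariant finitary factor of an auxiliary i.i.d.\ field with exponential tails, a random partition of $\Z^d$ into finite cells, attach to each cell an independent uniform element of $B_d$ and an independent copy of $X_0$, colour each cell by that copy read in the cell's frame — which is still a proper colouring using at most the original $4$ colours, since $B_d$ acts by isometries of $\Z^d$ and does not enlarge the colour set — and finally repair the colouring within a bounded neighbourhood of the cell walls, where differently-framed copies meet, by an isometry-equivariant, finitary, bounded-range recolouring that uses only the $4$ colours and leaves the colours away from the walls unchanged. Properness then holds everywhere; isometry-invariance holds because every ingredient (the partition, the $B_d$-marks with their uniform law, the copies of $X_0$, and the repair rule) is equivariant; finite dependence holds because, once the partition is itself finitely dependent, well-separated regions become functions of disjoint parts of the independent data; and the exponential coding radius is inherited from those of the partition, of the copies of $X_0$, and of the fixed repair range.

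The crux — and what makes the method worth isolating — is to realise the cell partition and the boundary repair simultaneously so that all requirements hold at once: the partition should be an \emph{equivariant finitary factor} with exponential tails, should itself be \emph{finitely dependent} (so that the local frames of well-separated regions are genuinely independent and the various unbounded coding radii cancel to give \emph{exact} finite dependence, in the spirit of the mechanism of \cite{hl}), should have cells arranged so that the frozen boundary colours always admit a proper $4$-colouring extension across the walls (so no extra colours are needed), and the whole scheme should be equivariant. I expect engineering this compatible partition-plus-repair to be the main obstacle. A possible alternative route is to construct, for each $n$, a finitely dependent colouring that is $B_d$-invariant up to an error tending to $0$ and then pass to a weak limit, paralleling the way the one-dimensional colouring of \cite{hl} is itself obtained.
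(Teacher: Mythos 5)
Your proposal is a genuinely different route from the paper's, and your diagnosis of why naive random-rotation symmetrization destroys finite dependence is correct and well articulated. But there is a real gap at exactly the point you yourself flag as the crux: the boundary repair. You assert that near cell walls one can ``repair the colouring \dots by an isometry-equivariant, finitary, bounded-range recolouring that uses only the $4$ colours and leaves the colours away from the walls unchanged,'' but this is a substantial unproven claim, not an engineering detail. Given an arbitrary admissible boundary condition on a strip of bounded width in $\Z^d$ ($d\geq 2$) separating two independently reoriented $4$-colourings, it is not automatic that a proper $4$-colouring extension exists with only $4$ colours; this is exactly the strong-irreducibility content that \cite{hsw} establishes (and it requires starting from a long-range colouring and a carefully designed multi-stage recolouring, not a single bounded-range patch). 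You also do not explain how to make the cell partition itself an isometry-\emph{equivariant} finitary factor of i.i.d.\ rather than merely isometry-invariant in law; this is where the paper needs a specific device (\cref{dist}: an equivariant redistribution of the i.i.d.\ field to directed site-pairs, via ranking), and nothing analogous appears in your sketch. Finally, ``finite cells'' is too weak for finite dependence --- you need cells of uniformly bounded diameter.

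The paper avoids both problems by symmetrizing at a more forgiving level than colourings. It builds a translation-equivariant $(m,m)$-net, uses \cref{dist} to generate $\kappa=\#\Gamma$ \emph{independent} rotated copies $J_\gamma=\gamma G(\gamma^{-1}W_\gamma)$ from a single equivariant i.i.d.\ field, and takes the pointwise maximum $J=\max_\gamma J_\gamma$, which is automatically isometry-equivariant and stays finitely dependent. Because nets are (essentially) stable under taking maxima and then thinning, there is no boundary-repair problem; the max-of-nets is converted back through a thinner net, then a bounded tiling, then a many-colour range-$m$ colouring, and only then to a $4$-colouring via Corollary~15 of \cite{hsw} --- which is the black box encapsulating exactly the $4$-colour extension property your repair step presupposes. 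Your alternative suggestion (weak limits of approximately invariant colourings) also would not obviously preserve the ffiid or exponential-tail properties, which are part of the statement.
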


The analogue of \cref{main} was known already for dimension $d=1$; see \cite{hl} for existence (as mentioned above) and \cite{hhl} for the claim about the finitary factor.  For $d\geq 2$, the number of colours cannot be reduced further.  Indeed, there is not even a translation-invariant finitely dependent $3$-colouring of $\Z^d$ for $d\geq 2$; see \cite{hsw}. Our colouring is $k$-dependent but its dependence distance $k=k(d)$ is very large (even for $d=2$, and even if we allow an arbitrary large number of colours as opposed to just $4$).  We do not know the optimal $k$ for $4$ or any larger number colours, nor indeed whether it equals $1$ for sufficiently many colours.
% A trade-off between number of colours and dependence distance is expected.
It is known that even a translation-invariant $1$-dependent colouring of $\Z^d$ requires at least $(d+1)^{d+1}/d^d$ colours, and at least $9$ and $12$ respectively for $d=2$ and $d=3$; see \cite{hl}.

\subsection*{Finitary factors}
We conclude the introduction by formally defining and further discussing finitary factors and related concepts.  The first finitely dependent colouring in \cite{hl} was defined by constructing its finite-dimensional distributions and appealing to Kolmogorov extension, but a direct ``construction on $\Z$'' was missing.  This gap was later filled in \cite{fin,hhl}, with finitary factors providing the natural way to formalize the notion.

Let $X$ and $Y$ be stochastic processes indexed by $\Z^d$ (taking real values, say).  We say that $X$ is a \df{factor} of $Y$ if almost surely $X=f(Y)$ for some measurable function $f$ that is \df{translation-equivariant}, which is to say that $f(\theta y)=\theta f(y)$ for every translation $\theta$ of $\Z^d$ and every configuration $y=(y(v):v\in\Z^d)\in \R^{\Z^d}$.  (Translations act on configurations via $(\theta y)(v):=y(\theta^{-1} v)$ for $v\in\Z^d$.)  \df{Isometry-equivariance} is defined analogously.  Note that (in keeping with standard convention) translation-equivariance is part of the definition of a factor, while isometry-equivariance is an additional stronger property.  When the stronger property is not assumed we sometimes add the descriptor ``translation-equivariant'' for clarity and emphasis.

Let $|\cdot|$ be the $1$-norm on $\Z^d$ and denote the ball $B(r)=\{x\in \Z^d:|r|\leq r\}$.
A factor $f$ from $Y$ to $X$ is \df{finitary} if, for almost every configuration $y$ respect to the law of $Y$, there exists $r<\infty$ such that the image variable $f(y)(0)$ at the origin is determined by the restriction of $y$ to $B(r)$, in the sense that $f(y)(0)=f(y')(0)$ for any $y'$ that agrees with $y$ on $B(r)$.  We let $R=R(y)$ be the smallest such $r$.  The random variable $R=R(Y)$ is then called the \df{coding radius} of the finitary factor.  This is the random distance up to which $Y$ must be examined to determine $X(0)$.  A non-negative random variable $R$ has \df{exponential tail} if $\P(R>r)<C e^{-cr}$ for all $r\geq 0$ and constants $c,C\in(0,\infty)$.  We call a finitary factor whose coding radius has exponential tail \df{exponentially finitary}.  A process $X$ that can be expressed as a finitary factor of some i.i.d.\ process $Y$ is called a finitary factor of i.i.d., or simply \df{ffiid}.  Without loss of generality the i.i.d.\ variables $(Y(v):v\in\Z^d)$ can here be taken to be uniformly distributed on $[0,1]$, denoted \un.  Combining the various concepts, a process may be exponentially ffiid, or isometry-equivariant ffiid, etc. Note that a ffiid process is automatically translation-invariant, and an isometry-equivariant ffiid process is isometry-invariant.

A finitary factor $f$ whose coding radius is a \df{bounded} random variable (so that $R\leq m$ for some finite deterministic $m$) is called a (\df{radius-}$m$) \df{block factor}.  In this case there is a function $g:\R^{B(m)}\to \R$ such that $f(y)(v)=g(y(u): u\in v+B(m))$ for all $v\in\Z^d$ and almost all configurations $u$.  Note that a block factor is in particular exponentially finitary.

Finitary factors and their interactions with colouring and other shifts of finite type are intimately tied to distributed algorithms and descriptive combinatorics.  See \cite{local} for a detailed discussion.  In particular, an exponentially ffiid process corresponds to a local algorithm that requires $O(\log n)$ steps on a portion of the graph of diameter $n$, while automorphism-equivariance corresponds to a local algorithm that works in an undirected setting.

\section{Preliminaries and proof outline}
\label{prelim}

As remarked above, existence of translation-invariant finitely dependent colourings was first proved in \cite{hl}.  The key case is dimension $d=1$, where the colourings are in fact isometry-equivariant.

\begin{thm}[\cite{hl}]\label{1d}
  There exist an isometry-invariant $1$-dependent $4$-colouring and $2$-dependent $3$-colouring of $\Z$.
\end{thm}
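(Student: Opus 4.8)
\emph{Proof proposal.}
The plan is to build the colouring from the top down: construct a consistent family of finite-dimensional distributions supported on proper colourings and then invoke Kolmogorov extension. Since there is no block-factor colouring at all, one should not expect the process to arise as a naive bounded-range function of i.i.d.\ variables; the content is in guessing the right finite-dimensional laws.

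Concretely, for the target pair $(k,q)\in\{(1,4),(2,3)\}$ I would look for functions $\mu_n\colon[q]^{[n]}\to\R$ (one for each $n\ge0$) that vanish on words $w$ which are not \emph{proper} (i.e.\ fail $w_i\ne w_{i+1}$), and that satisfy: (i) $\mu_n\ge0$; (ii) $\sum_w\mu_n(w)=1$; (iii) consistency under deleting a first or last letter, $\sum_{a\in[q]}\mu_{n+1}(aw)=\mu_n(w)=\sum_{a\in[q]}\mu_{n+1}(wa)$; (iv) palindrome symmetry $\mu_n(w_1\cdots w_n)=\mu_n(w_n\cdots w_1)$; and (v) the gap-factorization $\sum_{c_1,\dots,c_k\in[q]}\mu_{a+k+b}(u\,c_1\cdots c_k\,v)=\mu_a(u)\mu_b(v)$ for all proper $u,v$. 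Given (i)--(iii), Kolmogorov extension produces a stationary process $X$ on $\Z$ (stationary because (iii) makes the finite-dimensional laws shift-invariant); (iv) makes the law invariant under $v\mapsto-v$, hence under all isometries of $\Z$; the support condition forces $X$ to be a $q$-colouring; and (v) yields $k$-dependence — for finite $A,B\subseteq\Z$ at distance $>k$ one writes $\P(X|_{A\cup B}=\xi)$ as a marginal of a single $\mu_N$ and peels every gap of length $\ge k$ between consecutive retained positions by applying (v) to a width-$k$ sub-gap and clearing leftover positions with (iii); this factors the law over the retained runs, so $X|_A$ and $X|_B$ are independent. Thus everything reduces to exhibiting $\mu_n$ with (i)--(v), and specializing $(k,q)$.

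Writing down $\mu_n$ is the heart of the matter, and (i) is the true obstacle — it is inseparable from the choice of formula, since a ``wrong'' $\mu_n$ satisfying (ii)--(v) will dip negative, which is exactly why $q$ cannot be too small (for instance a translation-invariant $2$-colouring of $\Z$ forces perfect correlation between arbitrarily distant sites, so cannot be finitely dependent). I would expect $\mu_n(w)$ to be a \emph{signed} sum over the internal combinatorial structure of $w$ — over some family of subsets of positions (say those $i$ with $w_{i-1}=w_{i+1}$, or an independent-set-type family), with terms monomial in $1/q$ or in a fixed real parameter, and signs arranged so that the alternating cancellations encode precisely (ii), (iii) and (v); these three should then drop out of a generating-function computation, encoding $\sum_n(\text{a suitable aggregate of the }\mu_n)\,z^n$ and checking the functional equation each property demands, while (iv) one simply builds in by keeping the formula reversal-symmetric. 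For non-negativity itself there are two routes: prove it head-on by an inductive or analytic bound on the alternating sum, in which case the thresholds $q\ge4$ (for $k=1$) and $q\ge3$ (for $k=2$) ought to surface as the requirement that some explicit quantity — a root of a characteristic polynomial, or the phase of a trigonometric factor — stay in an admissible range; or, more cheaply, realise $\mu_n$ as the law of an explicitly described random proper colouring (a finitary factor of an i.i.d.\ sequence), so that (i) holds by construction and one need only check that this rule reproduces (ii)--(v). Either way, once (i)--(v) are in hand the theorem follows from the outline above.
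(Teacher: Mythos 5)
Your framework is the right one, and in fact the paper says as much in the introduction: the construction in [hl] was ``defined by constructing its finite-dimensional distributions and appealing to Kolmogorov extension.'' Your axioms (i)--(v) are the correct wish-list, palindrome symmetry (iv) is precisely what upgrades translation-invariance to isometry-invariance on $\Z$, and your gap-peeling reduction of $k$-dependence to the width-$k$ factorization (v) together with consistency (iii) is sound.

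However, the proposal stops exactly where the proof actually lives: you never write down $\mu_n$, and your principal guess about its nature is inverted. You anticipate a signed, alternating formula for which non-negativity is ``the true obstacle,'' with the thresholds on $q$ surfacing from a positivity estimate. The Holroyd--Liggett measure is the opposite. For a proper word $x$ of length $n$, $\mu_n(x)$ is a \emph{normalized count} of admissible insertion orders of the $n$ letters (equivalently, it is given by a manifestly positive recursion in $n$ that averages over deletions), so (i) is free. What is genuinely delicate --- and what makes the construction ``short but mysterious,'' in the paper's words --- is that this counting measure satisfies the consistency identity (iii) and the gap-factorization (v) at all, and does so precisely at the parameter pairs $(k,q)=(1,4)$ and $(2,3)$; that is verified by an algebraic, generating-function-type computation, not by bounding an alternating sum. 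Your fallback route --- realize $\mu_n$ as the law of an explicitly described random proper colouring so that (i) holds by fiat --- is the one that matches reality, except the random object is a finite insertion process producing a word of length $n$, not a finitary factor of an i.i.d.\ sequence on $\Z$; indeed the paper stresses that the ffiid presentation was missing from [hl] and was only supplied in later work. In short: correct scaffolding, but the load-bearing element (the explicit $\mu_n$ and the algebraic identities it satisfies) is absent, and the difficulty is misattributed to positivity.
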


On the other hand there is no translation-invariant $1$-dependent $3$-colouring of $\Z$; see \cite{hsw} or \cite{hl}.  Turning to higher dimensions, the following is a straightforward consequence of \cref{1d}, but note crucially that isometry-invariance is weakened to translation-invariance.

\begin{cor}[\cite{hl}]\label{trinv}
  For any $d\geq 2$ there exists a translation-invariant $1$-dependent $4^d$-colouring of $\Z^d$.
\end{cor}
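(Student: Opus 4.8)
The plan is to assemble the colouring of $\Z^d$ from $d$ independent colourings of $\Z$, the $k$-th one handling the edges in coordinate direction~$k$, with one twist that is essential for getting the dependence range down to~$1$ in the $1$-norm: each of these colourings should use \emph{fresh} randomness on each line of $\Z^d$ in its own direction. Concretely, I would use \cref{1d} to fix a translation-invariant $1$-dependent $4$-colouring $Z=(Z(n):n\in\Z)$ of $\Z$. For each $k\in[d]$, let $\{X^{(k)}_u:u\in\Z^{d-1}\}$ be i.i.d.\ copies of $Z$, with the $d$ families over $k\in[d]$ taken mutually independent. For $v=(v_1,\dots,v_d)\in\Z^d$, write $\hat v_k\in\Z^{d-1}$ for $v$ with its $k$-th coordinate deleted, and set
\[
  Y(v):=\bigl(X^{(1)}_{\hat v_1}(v_1),\;X^{(2)}_{\hat v_2}(v_2),\;\dots,\;X^{(d)}_{\hat v_d}(v_d)\bigr)\in[4]^d .
\]
Since the colour set has size $4^d$, it then remains to check that $Y$ is a proper colouring, translation-invariant, and $1$-dependent.

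The first two are routine. Writing $e_k$ for the $k$-th standard basis vector, one has $\hat v_k=\widehat{(v+e_k)}_k$, so the $k$-th coordinates of $Y(v)$ and $Y(v+e_k)$ are the values of a single copy $X^{(k)}_{\hat v_k}$ at the consecutive integers $v_k$ and $v_k+1$, hence distinct since $X^{(k)}_{\hat v_k}$ properly colours $\Z$; as every edge of $\Z^d$ has this form, $Y$ is a proper colouring. For translation-invariance, a translation of $Y$ amounts to reindexing each i.i.d.\ family $\{X^{(k)}_u\}_u$ by a shift of $\Z^{d-1}$ together with a translation of the argument of each copy, and neither operation changes the joint law, by the i.i.d.\ structure and the translation-invariance of $Z$.

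The substantive step is $1$-dependence, and I would argue it directly. Let $S,T\subseteq\Z^d$ be at $1$-norm distance at least~$2$. For $k\in[d]$ and $u\in\Z^{d-1}$, put $S^k_u:=\{v_k:v\in S,\ \hat v_k=u\}\subseteq\Z$ and define $T^k_u$ similarly; then $(Y(v):v\in S)$ is a function of the variables $\bigl(X^{(k)}_u|_{S^k_u}:k\in[d],\,u\in\Z^{d-1}\bigr)$, and likewise $(Y(v):v\in T)$ of the $X^{(k)}_u|_{T^k_u}$. The key point is that whenever $i\in S^k_u$ and $i'\in T^k_u$, the witnessing points of $S$ and $T$ agree in every coordinate except the $k$-th, where they differ by $|i-i'|$; since those points are at $1$-norm distance at least~$2$, we get $|i-i'|\geq 2$, so $S^k_u$ and $T^k_u$ are at distance at least~$2$ in $\Z$, whence $X^{(k)}_u|_{S^k_u}$ and $X^{(k)}_u|_{T^k_u}$ are independent by the $1$-dependence of $X^{(k)}_u$. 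As the families $X^{(k)}_u$ are mutually independent over all pairs $(k,u)$, splitting each of these independent ``blocks'' into its $S$-part and $T$-part then yields independence of $(Y(v):v\in S)$ and $(Y(v):v\in T)$.

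I expect this last step to carry all the content, and it is worth keeping in view why the most obvious construction does \emph{not} work: the coordinatewise product $v\mapsto\bigl(Z_1(v_1),\dots,Z_d(v_d)\bigr)$ with mutually independent $1$-dependent colourings $Z_1,\dots,Z_d$ of $\Z$ is $1$-dependent only for the $\infty$-norm, because two sites differing by $e_1+e_2$ lie at $1$-norm distance~$2$ yet their images both involve the dependent pair $\{Z_1(v_1),Z_1(v_1+1)\}$. Indexing the $k$-th factor by the whole line through $v$ in direction~$e_k$, with independent randomness on distinct lines, is exactly what repairs this: if two distinct sites have $Y$-values built from any shared randomness, they must agree in all coordinates but one, and hence be nearest neighbours.
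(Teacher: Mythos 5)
Your construction is the same one the paper uses (described there only for $d=2$ ``for simplicity''): assign an independent copy of the $1$-dependent $4$-colouring of $\Z$ to every axis-parallel line of $\Z^d$, and let $Y(v)$ be the $d$-tuple of colours that the $d$ lines through $v$ assign to $v$. You have spelled out the $1$-dependence verification, which the paper declares ``easy to check''; your argument via the sets $S^k_u$, $T^k_u$ and the observation that any shared copy $X^{(k)}_u$ can only be queried at $\Z$-points of distance $\geq 2$ is correct, as is the final appeal to independence of the per-$(k,u)$ blocks. So this is a correct proof, and it is essentially the paper's approach.
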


To illustrate the difficulty to be overcome in strengthening translation-invariance to isometry-invariance, we explain the simple construction behind the above corollary.  For simplicity we restrict attention to the key case $d=2$.  For each $j\in\Z$ we let $Y^j=(Y^j(i):i\in \Z)$ and $Z^j=(Z^j(i):i\in \Z)$ be copies of the $1$-dependent $4$-colouring from \cref{1d}, where the processes $(Y^j,Z^j:j\in\Z)$ are all independent of each other.  Now let
$$X(i,j)=\bigl(Y^i(j),Z^j(i)\bigr),\quad (i,j)\in\Z^2.$$
In other words, we take a colouring of each horizonal and each vertical line, and combine the two colours of the two lines that intersect at a site into an ordered pair placed at that site.
It is easy to check that $X=(X(i,j):(i,j)\in\Z^2)$ is a translation-invariant $1$-dependent $16$-colouring of $\Z^2$, where the set of colours is the set of ordered pairs $[4]^2$ (which is obviously in bijection with $[4^2]$).

Since the colouring on $\Z$ was reflection-invariant, $X$ is also invariant in law under the reflection in each coordinate axis.  However, it is not invariant under the reflection $(i,j)\mapsto(j,i)$ that exchanges the two coordinates.  The difficulty is that we had to make a choice about which way to order the ordered pair $X(i,j)=(\cdot,\cdot)$, and exchanging the coordinates disrupts it.  This obstruction might appear to be a trivial technical one, but we know of no simple way around it.  The main contribution of this article is a not-so-simple way around.

%It is desirable to express a process of interest as a finitary factor of an i.i.d.\ process (sometimes abbreviated to \df{ffiid}).  Moreover, we might hope for a coding radius with good tail decay, and in the case of an isometry-invariant process it is natural to require the factor to be isometry-equivariant.  Note that a ffiid process is automatically translation-invariant, and an isometry-equivariant ffiid process is isometry-equivariant.

One may hope for a process of interest to be ffiid, preferably with good tail decay of the coding radius.  It was shown in \cite{fin} that the $1$-dependent $4$-colouring in \cref{1d} is ffiid (via an explicit factor), while in \cite{spinka} it was shown (via a general argument) that every translation-invariant finitely dependent process on $\Z$ is ffiid.  However, in both cases the coding radius of the constructed factor has infinite mean.  This was improved on by the following result, which was obtained by via a surprising generalization of the construction of \cite{hl} from uniform to Mallows-distributed permutations.

\begin{thm}[\cite{hhl}]\label{hhl}
  For each of $(k,q)=(1,5),(2,4),(3,3)$ there exists a $k$-dependent $q$-colouring of $\Z$  that is isometry-equivariant exponentially ffiid.
\end{thm}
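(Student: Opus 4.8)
\textbf{Proof proposal for \cref{hhl}.}
The plan is to revisit the construction of the finitely dependent colourings of $\Z$ from \cite{hl} and show that it can be carried out with sufficient localization to yield an exponentially finitary factor. Recall that the colourings in \cref{1d} arise from a recursive scheme on intervals: one builds the colouring on a union of two adjacent blocks from the colourings on each block, resolving the boundary conflict by a rule that is driven by fresh auxiliary randomness. Running this recursion to its fixed point produces a consistent family of finite-dimensional distributions, and hence (by Kolmogorov extension) a process on $\Z$. The first step is to recast this recursion as an explicit algorithm that, given i.i.d.\ input $(Y(v):v\in\Z)$ with $Y(v)\sim\un$, outputs the colour at the origin: one chooses a random interval around $0$ according to the recursive splitting, and the claim is that for a suitable parametrization the colour at $0$ stabilizes once the interval is large enough.

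Second, following \cite{hhl}, rather than using uniform permutations as the basic combinatorial object in the construction of \cite{hl}, I would use Mallows-distributed permutations with a fixed parameter $q\in(0,1)$. The key structural feature is that a Mallows permutation of a long interval, restricted to a sub-block, decouples from the rest of the interval at a rate that is exponential in the distance to the block boundary; equivalently, the displacement of any given point under a Mallows permutation has an exponential tail. This means that when one decodes the colour at the origin, the dependence on input sites far from $0$ decays geometrically: with probability at least $1-Ce^{-cr}$, the portion of the Mallows data within distance $r$ already determines the output. One must check that the colouring identities used in \cite{hl} (the "linear equations" or the telescoping that enforces both properness and $k$-dependence) still hold when uniform permutations are replaced by Mallows permutations — this is precisely the content verified in \cite{hhl}, and it is where the specific pairs $(k,q)=(1,5),(2,4),(3,3)$ come from: the Mallows parameter forces one extra colour relative to the uniform case at each dependence level.

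Third, to obtain isometry-equivariance (here just reflection-equivariance, since $d=1$) I would arrange the whole construction to be symmetric under reversal of $\Z$. The recursion on intervals is already left-right symmetric in its splitting structure; the Mallows measure on a finite interval is invariant under the reversal $\pi\mapsto w_0\pi w_0$ (conjugation by the longest element), so the auxiliary permutations can be sampled in a reflection-equivariant way from the i.i.d.\ input (e.g.\ by using a reflection-equivariant rule to convert $\un$ variables into Mallows permutations on each dyadic block). Assembling these pieces: the map from the i.i.d.\ input to the colouring is translation-equivariant by construction, reflection-equivariant by the symmetry just described, its output is a $k$-dependent $q$-colouring by the algebraic identities of \cite{hl,hhl}, and its coding radius has exponential tail by the Mallows displacement estimate.

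The main obstacle is the middle step: verifying that the delicate algebraic structure of the construction in \cite{hl} — which is what simultaneously guarantees properness of the colouring and $k$-dependence — survives the deformation from uniform to Mallows permutations, and pinning down exactly how the admissible $(k,q)$ pairs shift. A secondary difficulty is controlling the coding radius: one must show that the recursive interval-splitting terminates (stabilizing the colour at $0$) within a random distance with exponential tail, which requires combining the geometric decay of Mallows displacements with a union bound over the levels of the recursion while keeping the constants under control. Since all of this is carried out in \cite{hhl}, for the purposes of this paper I would simply quote \cref{hhl} as the input to the symmetrization argument that follows.
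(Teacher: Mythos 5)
This statement is quoted in the paper directly from \cite{hhl} with no proof given, and your proposal ends in exactly the same place: after sketching the Mallows-permutation deformation of the \cite{hl} construction (which is indeed the mechanism behind the cited result, including the shift to $(k,q)=(1,5),(2,4),(3,3)$ and the exponential coding-radius tail from Mallows displacement decay), you correctly conclude that the theorem should simply be invoked as external input to the symmetrization argument. So the proposal is consistent with, and takes essentially the same approach as, the paper.
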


Recall that no colouring exists for $(k,q)=(1,3)$, while the two original colourings of \cite{hl} had $(k,q)=(1,4),(2,3)$.  In these last two cases it is not known whether there exist colourings that are exponentially ffiid, or ffiid with finite mean coding radius, nor indeed whether either property holds for the known colourings from \cite{hl}.

We will treat \cref{hhl} as a black box, and build higher-dimensional colourings from the one-dimensional ones.  (In fact we need only one case, say $(k,q)=(1,5)$, and we need only translation-equivariance on $\Z$).  In particular, we will make extensive use of block factors, and we rely on the following simple facts.

\begin{samepage}
\begin{lemma}\ \label{composition}
\begin{enumerate}[label={\rm (\roman*)}]
  \item A composition of translation-equivariant maps is translation-equivariant, and similarly for isometry-equivariance.
  \item A composition of exponentially finitary factors is an exponentially finitary factor.
  \item A radius-$r$ block factor of a $k$-dependent process is $(k+2r)$-dependent.
\end{enumerate}
\end{lemma}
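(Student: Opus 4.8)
The plan is to treat the three parts separately, in increasing order of difficulty; parts~(i) and~(iii) are short unwindings of the definitions, while part~(ii) carries the only real content. For~(i), given translation-equivariant maps $f$ and $g$ and a translation $\theta$ of $\Z^d$, I would just compute
\[
(f\circ g)(\theta y)=f\bigl(g(\theta y)\bigr)=f\bigl(\theta\, g(y)\bigr)=\theta\, f\bigl(g(y)\bigr)=\theta\,(f\circ g)(y),
\]
using equivariance of $g$ and then of $f$; replacing ``translation'' by ``isometry'' throughout gives the isometry-equivariant case verbatim, so that is the entire proof of~(i).

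For~(iii), let $X=f(Y)$ where $f$ is a radius-$r$ block factor and $Y$ is $k$-dependent, so that (on a full-measure event) $X(v)$ is determined by the restriction of $Y$ to $v+B(r)$, for every $v$ simultaneously. Fix $A,B\subseteq\Z^d$ at distance greater than $k+2r$. Then $(X(v):v\in A)$ is a measurable function of $(Y(u):u\in A+B(r))$, and $(X(v):v\in B)$ is a measurable function of $(Y(u):u\in B+B(r))$. Since $|(a+u)-(b+w)|\ge|a-b|-2r>k$ for all $a\in A$, $b\in B$, $u,w\in B(r)$, the index sets $A+B(r)$ and $B+B(r)$ lie at distance greater than $k$, so by $k$-dependence of $Y$ the two families of $Y$-variables are independent, and hence so are their images. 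Thus $X$ is $(k+2r)$-dependent.

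For~(ii), suppose $X=f(Y)$ and $Z=g(X)$ with $f,g$ exponentially finitary factors having coding radii $R_f$ (a function of configurations in the source of $f$) and $R_g$ (in the source of $g$); I may assume $Y$, and hence $X=f(Y)$, translation-invariant, which is automatic in the case of interest where $Y$ is i.i.d. Translation-equivariance of $f$ implies that $f(y)(v)$ is determined by the restriction of $y$ to $v+B(s_v)$, where $s_v=s_v(y)$ is the coding radius of $f$ ``based at $v$'', i.e.\ $R_f$ evaluated at the translate of $y$ carrying $v$ to the origin; by translation-invariance of $Y$ each $s_v(Y)$ has the same law as $R_f(Y)$. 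The key step is then the pointwise bound
\[
R_{g\circ f}(y)\ \le\ R_g\bigl(f(y)\bigr)\ +\ \max_{v\in B(R_g(f(y)))}s_v(y),
\]
obtained by observing that $g(f(y))(0)$ is determined by $f(y)$ restricted to $B(\rho)$ with $\rho:=R_g(f(y))$, and that this restriction is in turn determined by $y$ restricted to $B\bigl(\rho+\max_{v\in B(\rho)}s_v(y)\bigr)$. Finally I would bound the tail by splitting at $r/2$: $\P(R_{g\circ f}(Y)>r)$ is at most $\P\bigl(R_g(X)>r/2\bigr)+\P\bigl(\exists\,v\in B(r/2):s_v(Y)>r/2\bigr)$, where the first term decays exponentially since $g$ is exponentially finitary as a factor of $X$, and the second is bounded via a union bound over the $O(r^d)$ sites of $B(r/2)$ by $O(r^d)\,Ce^{-cr/2}$, still exponentially small after absorbing the polynomial factor. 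Combined with part~(i) and the a.s.\ finiteness of the right-hand side above, this shows $g\circ f$ is an exponentially finitary factor.

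The only delicate point is the bookkeeping in~(ii): the outer coding radius $R_g$ is evaluated at the random intermediate configuration $f(Y)$, and the inner radius must be controlled simultaneously over a ball whose radius is itself the random quantity $R_g(f(Y))$. The stated pointwise inequality together with a union bound exploiting stationarity is exactly what resolves this, but it requires a little care with the translation-equivariance convention; everything else is a direct application of the definitions.
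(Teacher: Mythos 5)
Your proposal is correct and follows essentially the same route as the paper: parts (i) and (iii) are direct unwindings of the definitions, and for (ii) you bound the composite coding radius by the outer radius plus a maximum of inner radii over the ball of outer radius, then split the tail at $r/2$ and apply a union bound over the polynomially-many sites in that ball (the paper uses the marginally tighter bound $\max_{v\in B(R)}(|v|+S_v)$ in place of your $R+\max_{v\in B(R)}S_v$, but the tail analysis is identical). Your explicit remark that translation-invariance of the source process is used to equidistribute the per-site inner coding radii is a point the paper leaves implicit; otherwise the arguments coincide.
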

\end{samepage}

\begin{proof}
Parts (i) and (iii) follow immediately from the definitions.  Turning to (ii), let $X=f(Y)$ and $Y=g(Z)$ where $f$ and $g$ are exponentially finitary factors with coding radii $R$ and $S$ respectively.  Also for $v\in\Z^d$ write $S_v=S(\theta^{-v} Z)$, where $\theta^{-v}$ is the translation of $\Z^d$ by $-v$, for the coding radius of $g$ at location $v$, i.e.\ the radius around $v$ to which we need to examine $Z$ to determine $Y(v)$.  Then $X(0)$ can be determined from the restriction of $Z$ to $B(T)$ where
$$T=\max\bigl\{|v|+S_v:v\in B(R)\bigr\}.$$  But we have for any $t\geq 0$,
\begin{align*}
  \P(T>2t)&\leq \P(R>t)+\P(R\leq t,\ S_v>t \text{ for some }v\in B(t))\\
  &\leq C_1 e^{-c_1 t}+C_2 t^d C_3 e^{-c_3 t}\\
  &\leq C_4 e^{-c_4 t},
\end{align*}
for constants $C_i,c_i\in(0,\infty)$.
\end{proof}

Note in particular that by \cref{composition}(ii), a block factor of an exponentially ffiid process is also exponentially ffiid.

Our proof will proceed via a sequence of intermediate processes, whose characteristics we describe next.  Let $m$ be a positive integer and let $\Z^d[m]$ denote the graph with vertex set $\Z^d$ and with an edge between distinct vertices $u,v$ whenever $|u-v|\leq m$.  A \df{range-$m$ $q$-colouring} is a process $X=(X(v): v\in\Z^d)$ in which each $X(v)$ takes values in $[q]$ such that a.s.\ $X(u)\neq X(v)$ whenever $u$ and $v$ are adjacent in $\Z^d[m]$.

A range-$m$ colour-$a$ \df{tile} of a process $X$ is the vertex set of a connected component of the subgraph of $\Z^d[m]$ induced by the set of vertices $v$ with $X(v)=a$.  A process $X$ is a range-$m$ \df{$b$-bounded $q$-tiling} if it is $[q]$-valued and each of its range-$m$ tiles has at most $b$ elements.  As before, we omit parameters to indicate that they are allowed to take an unspecified finite value, so for example a range-$m$ bounded tiling means a process that is a range-$m$ $b$-bounded $q$-tiling for some finite $b$ and $q$.  Note that a range-$1$ $1$-bounded $q$-tiling is simply a $q$-colouring.

We say that a $\{0,1\}$-valued process $J=(J(v):v\in\Z^d)$ is an $(a,b)$-\df{net} if a.s.\ there do not exist distinct $u,v\in\Z^d$ with $\|u-v\|\leq a$ and $J(u)=J(v)=1$, but for every $u\in\Z^d$ there exists $v$ with $\|u-v\|\leq b$ and $J(v)=1$.  Note that the support of an $(m,m)$-net is precisely a maximal independent subset of $\Z^d[m]$.

\subsection*{Steps of the construction} The plan is as follows.  Fix $d\geq 1$ and let $m\geq 1$ be arbitrary.  For each of (i)--(ii) below we will show that there exists a finitely dependent process on $\Z^d$ that is (\emph{translation}-equivariant) exponentially ffiid with the stated property:
\begin{enumerate}[label=(\roman*)]
\item a range-$m$ colouring;
\item an $(m,m)$-net.
\end{enumerate}

Then for each of (iii)--(vii) below we will show that there exists a finitely dependent process that is \emph{isometry}-equivariant exponentially ffiid with the stated property:
\begin{enumerate}[resume,label=(\roman*)]
\item the maximum of $c$ $(m,m)$-nets, for some $c=c(d)$;
\item an $(m,c m)$-net, for some $c=c(d)$;
\item a range-$m$ bounded tiling;
\item a range-$m$ colouring;
\item a $4$-colouring.
\end{enumerate}
The maximum referred to in (iii) is site-wise, so the support of the process is the union of the supports of the $c$ nets.

For each of (ii)--(vii) we will deduce the existence of such a process from the case immediately before it in the list (sometimes with a different $m$).  The first step (i) comes from the $d=1$ case, \cref{hhl}.  The key symmetrization step is from (ii) to (iii), where we pass from translation- to isometry-equivariance.  If we only require an isometry-\emph{invariant} colouring rather than an isometry-equivariant factor as our final process, then this step becomes very simple: we simply take the maximum of a family of nets that are images of i.i.d.\ copies of a translation-invariant net under each of the finitely many isometries of $\Z^d$ that fix the origin.  Obtaining a factor requires a more subtle method in which we first split and distribute the underlying i.i.d.\ variables appropriately.  This is addressed in \cref{init}.   Each of the other steps involves applying a suitably designed block factor.  The last step from (vi) to (vii) uses a construction from \cite{hsw}, for which $m$ needs to be at least some large constant depending on $d$.  If we require only a colouring rather than a $4$-colouring, then this step can be omitted, and $m$ can be taken to be $1$ in (v) and (vi), simplifying the proof somewhat.

In several of the steps we will need to add extra randomness, for which the following definition will be convenient.  Let $Y=(Y(v):v\in\Z^d)$ be a process and let $U=(U(v):v\in\Z^d)$ be a family of i.i.d.\ variables (uniformly distributed on $[0,1]$, say), independent of $Y$.  We denote the combined process $(Y,U)=((Y(v),U(v)):v\in\Z^d)$.  Suppose that $X$ is a block factor of $(Y,U)$.  Then we say that $X$ is a \df{randomized block factor} of $Y$.  Note that if $Y$ is respectively (exponentially) ffiid or finitely dependent then so is $(Y,U)$, so randomized block factors behave similarly to their deterministic counterparts for our purposes.

\section{Initial steps}
\label{init}

Steps (i) and (ii) of the plan described in the last section are relatively straightforward, as follows.

\begin{prop}\label{trans-col}
  Fix integers $d\geq 1$ and $m\geq 1$.  There exists a finitely dependent range-$m$ colouring of $\Z^d$ that is (translation-equivariant) exponentially ffiid.
\end{prop}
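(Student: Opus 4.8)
The plan is to build the colouring from the one-dimensional colourings supplied by \cref{hhl}, using only translation-equivariance and only one of its cases, say $(k,q)=(1,5)$. A deterministic periodic colouring (say, colour $v$ by its residue modulo $m+1$ in each coordinate) would be a range-$m$ colouring, but it cannot be made translation-equivariant, since there is no equivariant way to pin down its global phase; the construction circumvents this by using genuinely independent randomness along many parallel fibres. It is a short hierarchy of two operations. The first is \emph{tupling}: combining finitely many processes site by site into a vector-valued one. This is a radius-$0$ block factor, so by \cref{composition} the class of translation-equivariant, finitely dependent, exponentially ffiid processes is closed under it, at the cost of a bounded factor in the number of colours. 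The second, which I will call \emph{fibring}, takes an equivariant process $V$ on $\Z$: it partitions $\Z$ (or, later, $\Z^d$ via the cosets of a rank-one sublattice) into parallel arithmetic-progression fibres, runs an independent copy of $V$ on each fibre driven by the i.i.d.\ variables sitting on that fibre, and reassembles. The one point that needs genuine care is that fibring really produces a \emph{translation}-equivariant finitary factor of i.i.d.: a translation that permutes the fibres also shifts within them, and it is exactly the equivariance of the input $V$ that absorbs this extra internal shift. This is the verification I would carry out most carefully; the dependence ranges and exponential-tail estimates are then routine applications of \cref{composition}.

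First I would fix a $1$-dependent $5$-colouring $W$ of $\Z$ that is translation-equivariant exponentially ffiid (\cref{hhl}). For each integer $t\ge 1$, fibring $W$ along $\{\,r+t\Z : 0\le r<t\,\}$ yields a process $C^{(t)}$ on $\Z$ that is translation-equivariant, exponentially ffiid and $t$-dependent --- two sites in different fibres are driven by disjoint i.i.d.\ data, hence independent, while two sites in a common fibre at distance more than $t$ have fibre-indices more than $1$ apart, so $1$-dependence of $W$ applies --- and which satisfies $C^{(t)}(i)\ne C^{(t)}(i+t)$ for all $i$, because $i$ and $i+t$ are neighbours within their common fibre and $W$ is proper. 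Tupling, $X:=\bigl(C^{(1)},\dots,C^{(m)}\bigr)$ is a range-$m$ colouring of $\Z$: if $0<|i-j|\le m$ then the $|i-j|$-th coordinate alone distinguishes $X(i)$ from $X(j)$. Driving the $C^{(t)}$ by independent i.i.d.\ sources, $X$ is translation-equivariant, $m$-dependent, and exponentially ffiid. This settles the case $d=1$.

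For $d\ge 2$ I would fibre once more. List the finitely many nonzero $w\in\Z^d$ with $|w|_1\le m$, and let $p_1,\dots,p_N$ be the distinct primitive vectors occurring as $w/\gcd(w)$, so that $|p_i|_1\le m$. For each $i$, the cosets of $\Z p_i$ partition $\Z^d$ into parallel lines each isomorphic to $\Z$ --- here primitivity of $p_i$ makes $\Z^d/\Z p_i$ torsion-free, so the cosets genuinely tile $\Z^d$. Fibring the one-dimensional range-$m$ colouring $X$ along these lines yields $C_i$ on $\Z^d$: translation-equivariant, exponentially ffiid, finitely dependent, and with $C_i(u)\ne C_i(v)$ whenever $u-v$ is a nonzero integer multiple of $p_i$ of $1$-norm at most $m$. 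Then $\bigl(C_1,\dots,C_N\bigr)$ is the required range-$m$ colouring of $\Z^d$: for $0<|u-v|_1\le m$ we have $u-v=\gcd(u-v)\cdot p_i$ for some $i$ with $\gcd(u-v)\le|u-v|_1\le m$, so $C_i(u)\ne C_i(v)$. By the same bookkeeping with \cref{composition}, it is translation-equivariant, finitely dependent (with dependence range bounded in terms of $m$ alone), and exponentially ffiid. The only non-mechanical ingredient, to reiterate, is the equivariance of fibring.
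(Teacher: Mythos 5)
Your proof is correct and rests on the same core idea as the paper's: decompose $\Z^d$ into families of parallel lines, run the one-dimensional exponentially ffiid colouring from \cref{hhl} independently along each line (using independent i.i.d.\ sources for different directions), and tuple the resulting colours at each site, with translation-equivariance of the one-dimensional factor ensuring that no equivariance is lost in the choice of parametrization along each line. The only difference is organisational: the paper runs the $1$-dependent colouring directly along lines of every nonzero direction in $B(m)$, while you first bootstrap a range-$m$ colouring of $\Z$ by nesting fibres of periods $1,\dots,m$ and then run it along the primitive directions of $1$-norm at most $m$ --- two ways of bookkeeping the same tupled-lines construction.
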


\begin{proof}
  We apply the construction of \cite[Corollary 20]{hl} to the $1$-dimensional ffiid colouring of \cref{hhl}, adapted so as to yield a factor. The approach is an extension of the proof of \cref{trinv} discussed earlier.  A \df{line} is an infinite set of the form $\{a+ih:i\in\Z\}\subset\Z^d$ where $a,h\in\Z^d$, and $h\neq 0$ is called its \df{direction}.  Let $H$ be a fixed set containing exactly one of $h$ and $-h$ for each $h\in B(m)\setminus\{0\}$.  The idea is to assign a $1$-dependent colouring to each line with direction in $H$, and combine the colours of all lines passing through a site.  We need to do this via a factor.  By \cref{hhl}, let $f$ be an exponentially finitary factor that maps a process of i.i.d.\ \un\ variables $U=(U(i):i\in\Z)$ to a $1$-dependent $5$-colouring of $\Z$.  Let $(U(v,h): v\in \Z^d,h\in H)$ be i.i.d.\ \un\ variables indexed by sites and directions.  Also write $W(v)=(U(v,h):h\in H)$.

  For any line $L=\{a+ih:i\in\Z\}$ with direction $h\in H$, let $U^L=(U(a+ih,h):i\in\Z)$ and define a colouring $Y^L$ of $L$ by $Y^L(a+ih) = f(U^L)(i)$ for $i\in\Z$.  For a site $v$ and a direction $h\in H$, let $L(v,h)$ be the unique line of direction $h$ passing through $v$.  Now assign to any site $v$ the tuple of colours
  $$X(v):=(Y^{L(v,h)} (v):h\in H)\in[5]^H.$$
  It is straightforward to check that $X$ is $m$-dependent and a range-$m$ colouring (see \cite{hl} for details), and an exponentially finitary factor of the i.i.d.\ process $W$.
\end{proof}

\begin{lemma}\label{net-factor}
   Fix integers $d,q,m\geq 1$. There exists an isometry-equivariant block factor $f$ such that if $X$ is a range-$m$ $q$-colouring of $\Z^d$ then $f(X)$ is an $(m,m)$-net of $\Z^d$.
\end{lemma}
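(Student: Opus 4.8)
The plan is to realize $f$ as the classical ``greedy maximal independent set from a proper colouring'' rule, and to observe that, because this rule only ever looks back to neighbours of strictly smaller colour, it terminates after at most $q$ rounds and hence is a \emph{bounded}-radius block factor.

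Concretely, given a $[q]$-valued configuration $X$ (and extending $f$ arbitrarily to real-valued inputs, since only $[q]$-valued ones arise), I would define a set $S=S(X)\subseteq\Z^d$ by the following $q$-round procedure: writing $u\sim v$ for adjacency in $\Z^d[m]$, for $i=1,2,\ldots,q$ in turn, add to $S$ every vertex $v$ with $X(v)=i$ that has no neighbour $u\sim v$ already in $S$. Unwinding the rounds gives the recursive characterization
$$v\in S \iff \text{every } u\sim v \text{ with } X(u)<X(v) \text{ satisfies } u\notin S,$$
and we let $f(X)=J$ be the $\{0,1\}$-valued process with $J(v)=1$ exactly when $v\in S$. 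Evaluating $J(v)$ calls $J(u)$ only for neighbours $u$ of strictly smaller colour, so the recursion has depth at most $q-1$; every vertex consulted lies within $1$-norm distance $qm$ of $v$, and thus $J(v)$ is determined by the restriction of $X$ to $v+B(qm)$. Hence $f$ is a block factor of radius $qm$. It is isometry-equivariant because the rule refers only to the order $1<2<\cdots<q$ on colours and to the graph $\Z^d[m]$, both preserved by every isometry of $\Z^d$; in particular $f$ uses no coordinates of $\Z^d$ and no ordering of its vertices.

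It remains to verify that if $X$ is a range-$m$ $q$-colouring then $J$ is an $(m,m)$-net, i.e.\ that $S$ is a maximal independent set of $\Z^d[m]$. Independence: if $u,v\in S$ with $u\sim v$ then $X(u)\neq X(v)$ since $X$ is a proper colouring of $\Z^d[m]$, and if say $X(u)<X(v)$ then the displayed rule applied to $v$ forces $u\notin S$ --- a contradiction; so no two distinct vertices at distance $\leq m$ both lie in $S$. Maximality (equivalently, domination): if $v\notin S$ then the rule supplies a neighbour $u\sim v$ with $X(u)<X(v)$ and $u\in S$, so every vertex lies at distance $\leq m$ from $S$. These are precisely the two defining properties of an $(m,m)$-net.

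I do not anticipate a genuine obstacle: this is the standard colouring-to-MIS reduction familiar from distributed computing, and each step above is routine. The one point that genuinely matters is that the $q$-round greedy procedure gives a \emph{block} factor --- bounded rather than merely finitary coding radius --- which holds precisely because each recursive call strictly decreases the colour, capping the length of the dependency chain at $q-1$.
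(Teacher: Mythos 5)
Your proof is correct and is essentially the same construction as the paper's: the greedy maximal-independent-set-from-a-proper-colouring rule, realized as a bounded-radius block factor because each recursive dependency strictly decreases the colour. The only cosmetic difference is the traversal order --- the paper implicitly seeds $S$ with colour $1$ and then sweeps colours $q, q-1, \ldots, 2$ via the maps $\mathcal{E}_q, \ldots, \mathcal{E}_2$, whereas you sweep $1, 2, \ldots, q$ --- and your explicit recursive characterization and the $(q-1)$-depth bound on the dependency chain are a nice, slightly more transparent way to read off the coding radius.
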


\begin{proof}
  This is standard; see e.g.\ \cite[Corollary 11]{hsw}.  For the reader's convenience we briefly describe the construction.  Recall that we want a process whose support is a maximal independent set of $\Z^d[m]$.  For each $2\leq k\leq q$, we define the map $\mathcal{E}_k: [q]^{\Z^d}\to [q]^{\Z^d}$ between configurations that replaces colour $k$ with $1$ wherever it has no neighbouring $1$s in $\Z^d[m]$:
  $$(\mathcal{E}_k x)(v)=
  \begin{cases}
  1,&x(v)=k \text{ and }1\notin\{x(u):u\in v+B(m)\} \\
  x(v),&\text{otherwise.}
  \end{cases}
  $$
  Note that this is a radius-$1$ isometry-equivariant block factor.
  Now to define $f$ we apply each of these maps in sequence and finally keep only the $1$s:
  \belowdisplayskip=-12pt
  \[(f(x))(v) =
  \begin{cases}
  1&(\mathcal{E}_2 \mathcal{E}_3 \cdots \mathcal{E}_q x)(v)=1 \\
  0&\text{otherwise.}
  \end{cases}
  \]
\end{proof}

\begin{cor}\label{net}
  Fix integers $d,m\geq 1$. There exists a finitely dependent exponentially ffiid $(m,m)$-net of $\Z^d$.
\end{cor}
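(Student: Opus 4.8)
The plan is to obtain \cref{net} as an immediate consequence of the two preceding results, \cref{trans-col} and \cref{net-factor}, together with the composition facts in \cref{composition}. Concretely: by \cref{trans-col} with parameters $d$ and $m$, there is a finitely dependent range-$m$ $q$-colouring $X$ of $\Z^d$ (for some finite $q$) that is exponentially ffiid; say $X$ is a finitary factor of the i.i.d.\ process $U$. By \cref{net-factor} with parameters $d$, $q$, $m$, there is an (isometry-equivariant, hence in particular translation-equivariant) block factor $f$ such that $f(X)$ is an $(m,m)$-net of $\Z^d$. Then $f(X)$ is the desired process.

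The steps to verify are routine. First, $f(X)$ is translation-invariant and translation-equivariant ffiid: it is a composition of the finitary factor $X=X(U)$ with the block factor $f$, and a block factor is in particular an exponentially finitary factor, so by \cref{composition}(ii) the composition is an exponentially finitary factor; since $U$ is i.i.d., $f(X)$ is exponentially ffiid. (We do not need the isometry-equivariance of $f$ here, only that it is a block factor; \cref{composition}(i) confirms the composition is translation-equivariant.) Second, $f(X)$ is finitely dependent: $X$ is $k$-dependent for some finite $k$ by \cref{trans-col}, and $f$ is a radius-$r$ block factor for some finite $r$, so by \cref{composition}(iii) the process $f(X)$ is $(k+2r)$-dependent. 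Finally, $f(X)$ is an $(m,m)$-net by the defining property of $f$ in \cref{net-factor}, since $X$ is a range-$m$ $q$-colouring.

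There is essentially no obstacle here: this is step (ii) of the construction outlined in \cref{prelim}, and it is deliberately packaged so that all the work has already been done in \cref{trans-col} (which handles step (i) via the black box \cref{hhl}) and in the standard \cref{net-factor}. The only minor point worth noting is that the value of $m$ used when invoking \cref{trans-col} must match the value of $m$ in the net we are building, and the number of colours $q$ produced by \cref{trans-col} must be fed into \cref{net-factor}; both are finite, so this causes no difficulty. In writing the proof I would simply state the application of the two results, invoke \cref{composition}(ii) and (iii), and conclude.
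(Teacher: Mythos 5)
Your proposal is correct and matches the paper's proof exactly: apply the block factor from \cref{net-factor} to the colouring from \cref{trans-col}, and invoke \cref{composition} to propagate the finitely dependent and exponentially ffiid properties. The paper states this in one sentence; you have simply spelled out the routine verifications.
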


\begin{proof}
  Apply the block factor of \cref{net-factor} to the colouring of \cref{trans-col}, and use \cref{composition}.
\end{proof}

\section{Symmetrization}
\label{sym}

In this section we perform the step from (ii) to (iii) as described in \cref{prelim}.  As mentioned earlier, this would be easy if we merely required an isometry-invariant process, but obtaining an isometry-equivariant factor requires a further preparatory step involving the i.i.d.\ variables.

Let $\Gamma$ be the group of all isometries of $\Z^d$ that preserve the origin, and let $$\kappa=\kappa(d):=\#\Gamma=2^d d!$$ be its cardinality.  (This expression arises because the $d$ coordinates can be arbitrarily permuted, and each can be negated).
The following device will be useful.  Define the fixed vector $$\rho:=(1,2,\ldots,d)\in\Z^d,$$ and note that its images under elements of $\Gamma$ are all distinct.  (This is the only property of $\rho$ that we care about).

We will introduce a family of i.i.d.\ \un\ random variables indexed by pairs of sites whose difference is such an image of $\rho$.  We denote the set of such pairs by
$$\Xi_d:=\bigl\{(x,y)\in (\Z^d)^2: y-x\in\Gamma\rho\bigr\},$$
and we will introduce an i.i.d.\ family $W=(W(x,y): (x,y)\in\Xi_d)$.  We think of the variable $W(x,y)$ as located at the site $x$ but associated with an arrow pointing in the direction from $x$ to $y$.  These variables will be constructed from an i.i.d.\ family $U=(U(x):x\in\Z^d)$ in an isometry-equivariant way.  To make the latter notion precise, recall that an isometry $\theta$ of $\Z^d$ acts on a configuration $u=(u(x): x\in\Z^d)$ via $(\theta u)(x):=u(\theta^{-1} x)$; similary for a configuration $w=(w(x,y): (x,y)\in\Xi_d)$ we take $(\theta w)(x,y):=w(\theta^{-1}x,\theta^{-1}y)$.

\begin{prop}\label{dist}
  Let $d\geq 1$.  There exists a measurable map $F:[0,1]^{\Z^d}\to [0,1]^{\Xi_d}$ such that:
  \begin{enumerate}[label=(\roman*)]
  \item if $W=F(U)$, where $U=(U(x):x\in\Z^d)$ are i.i.d.\ \un, then $W=(W(x,y):(x,y)\in\Xi_d)$ are i.i.d.\ \un;
  \item $F$ is isometry-equivariant in the sense that $F(\theta u)=\theta F(u)$ for every isometry $\theta$ of $\Z^d$;
  \item $F$ is a $d$-block-factor in the sense that $(F(u)(0,w):w\in\Gamma\rho)$ is a function only of $(u(v):v\in B(d))$.
  \end{enumerate}
\end{prop}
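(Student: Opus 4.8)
The plan is to build $F$ from two independent mechanisms running at each site: a \emph{structural} part that reads a neighbourhood of $x$ and extracts a canonical ``orientation'' $\psi_x\in\Gamma$, and a \emph{content} part that splits the single input variable $u(x)$ into $\kappa$ independent uniforms and then labels these by directions using $\psi_x$. Since the images of $\rho$ under $\Gamma$ are distinct, $\Gamma$ acts freely and transitively on the orbit $\Gamma\rho$, so we may identify $\Gamma\rho$ with $\Gamma$ and regard the output at $x$ as a $\Gamma$-indexed tuple $(W(x,x+\gamma\rho):\gamma\in\Gamma)$. The reason a naive version fails, and the subtlety to be addressed, is that the structural part at a site $x$ must examine \emph{neighbouring} sites, whose input variables also play the role of content there; to avoid spurious correlations one first splits each $u(x)$ into two independent uniforms, one reserved for structure and one for content.

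Concretely, I would first fix, for each $u(x)\in[0,1]$, a measurable splitting of its binary expansion into two independent uniforms $A(x),B(x)$, and a further splitting of $B(x)$ into a $\Gamma$-indexed i.i.d.\ uniform tuple $(B(x)_\gamma:\gamma\in\Gamma)$; this is a pointwise (hence translation- and isometry-equivariant) operation on $u$, and if $U$ is i.i.d.\ \un\ then $\bigl(A(x),(B(x)_\gamma)_\gamma:x\in\Z^d\bigr)$ is an i.i.d.\ \un\ family. Next, on the full-measure set $\Omega_0\subseteq[0,1]^{B(d)}$ of configurations with pairwise distinct entries, $\Gamma$ acts freely: if $\theta$ fixes such a configuration it fixes $B(d)$ pointwise, and since $B(d)\supseteq\{e_1,\dots,e_d\}$ spans $\Z^d$ we get $\theta=\mathrm{id}$. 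After fixing an arbitrary total order on $[0,1]^{B(d)}$, define $\psi\colon\Omega_0\to\Gamma$ to send $u$ to the unique $\gamma\in\Gamma$ for which $\gamma^{-1}u$ is least among $\{\delta^{-1}u:\delta\in\Gamma\}$ (with any fixed $\Gamma$-invariant convention off $\Omega_0$, which is immaterial); a one-line substitution gives $\psi(\theta u)=\theta\psi(u)$ for $\theta\in\Gamma$. Finally, writing $\psi_x(A)$ for $\psi$ applied to the configuration $w\mapsto A(x+w)$ on $B(d)$, set
$$F(u)(x,x+\gamma\rho):=B(x)_{\,\psi_x(A)^{-1}\gamma},\qquad x\in\Z^d,\ \gamma\in\Gamma.$$

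It then remains to check (i)--(iii). Property (iii) is immediate, since $F(u)(0,\cdot)$ is a function of $A|_{B(d)}$ and $B(0)$, both determined by $u|_{B(d)}$. For (ii), translation-equivariance is clear from the local, translation-covariant form of the definition; and for $\theta\in\Gamma$ one computes that under $u\mapsto\theta u$ the structural datum transforms by $\psi_x(A)\mapsto\theta\,\psi_{\theta^{-1}x}(A)$ while the content tuple at $\theta^{-1}x$ moves rigidly to $x$ without relabelling, so that $F(\theta u)=\theta F(u)$ after plugging in and using $\psi(\theta\cdot)=\theta\psi(\cdot)$; since $\Gamma$ and the translations generate all isometries, full isometry-equivariance follows. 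The crux is (i): here I would condition on the whole family $(A(x):x\in\Z^d)$, which determines every $\psi_x(A)$ and hence fixes, for each $x$, the bijection $\gamma\mapsto\psi_x(A)^{-1}\gamma$ of $\Gamma$; thus $W=F(U)$ is obtained from $(B(x)_\gamma:x,\gamma)$ by reindexing within each site-block by a now-deterministic bijection. Since $(B(x)_\gamma:x,\gamma)$ is i.i.d.\ \un\ and independent of $(A(x):x)$, the conditional law of $W$ given the $A$-family is i.i.d.\ \un\ and does not depend on that family, so $W$ is (unconditionally) i.i.d.\ \un. The main obstacle is precisely this independence bookkeeping — arranging that the randomness consumed by the orientation maps $\psi_x$ is disjoint from the randomness that is permuted into the output — which is what forces the initial $A/B$ split; constructing the equivariant selector $\psi$ from the free $\Gamma$-action on generic configurations is the other ingredient, and is routine once set up.
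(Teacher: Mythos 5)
Your proof is correct and follows the paper's approach in its essential structure: split each $U(x)$ into a ``structure'' uniform and a $\kappa$-tuple of ``content'' uniforms, use the structure part to determine an element of $\Gamma$ that reindexes the content tuple, and then condition on the structure field to see that the output is i.i.d.\ \un\ because the reindexing becomes a deterministic bijection at each site. The only real difference is the device that produces the $\Gamma$-element: the paper uses the ranks of the structure variables at the $\kappa$ partner sites $x+\Gamma\rho$ (so each $W(x,x+\gamma\rho)$ picks $Z_T(x)$ where $T$ is the rank of $Y(x+\gamma\rho)$), whereas you pick the canonical representative of the $\Gamma$-orbit of the structure configuration on $x+B(d)$ under a fixed total order. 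Both are valid; the paper's version is slightly leaner (one split, and it directly ties the index at $(x,y)$ to data at $y$), while yours isolates the equivariant selector $\psi$ as a reusable abstract gadget relying on freeness of the $\Gamma$-action on generic configurations.
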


\begin{proof}
We describe the map by constructing $W=F(U)$ for $U$ i.i.d.\ \un.  Let $s$ be a fixed function that maps a \un\ random variable to a $(\kappa+1)$-vector of i.i.d.\ \un\ random variables (for example, by taking disjoint subsequences of  the binary expansion).  For each $x\in\Z^d$, let $(Y(x),Z_1(x),\ldots,Z_\kappa(x))=s(U(x))$.  We now use the $Y$ variables to reorder the $Z$ variables.  Specifically, for $x,y$ with $y-x\in\Gamma\rho$, let
\begin{gather*}
W(x,y)=Z_T(x), \text{ where} \\
Y(y) \text{ is the $T$th largest among } \bigl\{Y(z): z-x\in\Gamma\rho\bigr\}.
\end{gather*}
(On the null event where two of more elements of the latter set are equal, we can for example take $W(x,y)=0$ for all $y$.) The variables $(W(x,y):(x,y)\in\Xi_d)$ are all i.i.d.\ \un, since this holds even conditional on $((Y(x):x\in\Z^d)$.  The remaining properties (ii) and (iii) are clear from the construction.
\end{proof}

\begin{remark}
  In the above proof and others to follow, the map $F$ is described via its action on the \emph{random} configuration $U$ (always denoted by an upper case letter).  Strictly speaking we need to define $F(u)$ for every \emph{deterministic} configuration $u$ in an appropriate domain.
  However, such a definition is implicit in our description.  Making it explicit would involve a burdensome proliferation of symbols while adding nothing of significance.
\end{remark}

Now we are ready to apply the above tool.

\begin{prop}\label{max}
  For any $d\geq 1$ and $m\geq 1$ there exists a process on $\Z^d$ that is the maximum of $\kappa$ $(m,m)$-nets, and which is finitely dependent and isometry-equivariant exponentially ffiid.
\end{prop}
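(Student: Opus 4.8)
The plan is to symmetrize the translation-equivariant net of \cref{net} by taking the site-wise maximum of $\kappa$ rotated copies, while distributing the underlying randomness via \cref{dist} so that equivariance is retained. Concretely, let $g$ be a translation-equivariant exponentially finitary factor sending an i.i.d.\ \un\ field on $\Z^d$ to a finitely dependent $(m,m)$-net, as provided by \cref{net}. Given i.i.d.\ \un\ variables $U=(U(x):x\in\Z^d)$, set $W=F(U)$ with $F$ as in \cref{dist}, and for each direction $w\in\Gamma\rho$ put $W_w:=\bigl(W(x,x+w):x\in\Z^d\bigr)$. As the pairs $(x,x+w)$ partition $\Xi_d$ when $x$ and $w$ range over $\Z^d$ and $\Gamma\rho$, \cref{dist}(i) shows the fields $(W_w:w\in\Gamma\rho)$ are jointly i.i.d.\ \un\ on $\Z^d$. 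Writing $\gamma_w$ for the unique element of $\Gamma$ with $\gamma_w\rho=w$ (unique because the images of $\rho$ under $\Gamma$ are distinct), I would then define
\[
  N_w:=\gamma_w\,g\bigl(\gamma_w^{-1}W_w\bigr)\quad(w\in\Gamma\rho),
  \qquad X:=\max_{w\in\Gamma\rho}N_w,
\]
and claim that this $X$ has all the stated properties.

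The easier properties come first. Since $\gamma_w^{-1}W_w$ is again i.i.d.\ \un, the process $g(\gamma_w^{-1}W_w)$ is a finitely dependent $(m,m)$-net, and the $(m,m)$-net property, finite dependence, and exponential tail of the coding radius are all preserved under the norm-preserving, origin-fixing isometry $\gamma_w$; hence each $N_w$ is a finitely dependent $(m,m)$-net and an exponentially finitary factor of $W_w$. Thus $X$ is a maximum of $\kappa$ $(m,m)$-nets. Since the $W_w$ are independent and $N_w$ is a function of $W_w$ alone, the family $(N_w:w\in\Gamma\rho)$ is independent, so $X$ is finitely dependent. Moreover $F$ is a block factor with bounded coding radius (at most $d$, by \cref{dist}(iii)), hence exponentially finitary, so by \cref{composition}(ii) the composition $U\mapsto W\mapsto X$ is an exponentially finitary factor of an i.i.d.\ process; thus $X$ is (translation-equivariant) exponentially ffiid.

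The substantive point is that the factor $U\mapsto X$ is isometry-equivariant. It is translation-equivariant by construction, and every isometry of $\Z^d$ is a translation composed with an element of $\Gamma$, so it is enough to treat $\theta\in\Gamma$. The computation to carry out here is: from \cref{dist}(ii), $W_w(\theta U)=\theta\,W_{\theta^{-1}w}(U)$, and from $\gamma_w\rho=w$ one has $\gamma_{\theta^{-1}w}=\theta^{-1}\gamma_w$, equivalently $\gamma_w^{-1}\theta=\gamma_{\theta^{-1}w}^{-1}$ and $\gamma_w=\theta\gamma_{\theta^{-1}w}$. Substituting into the definition of $N_w$ and using only the translation-equivariance of $g$ yields $N_w(\theta U)=\theta\,N_{\theta^{-1}w}(U)$; since $\theta$ merely permutes the set $\Gamma\rho$ and commutes with the site-wise maximum, it follows that $X(\theta U)=\theta\,X(U)$. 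I expect this index-chasing to be the main obstacle: the conceptual work — producing $\kappa$ genuinely independent i.i.d.\ streams in an equivariant way — is already packaged in \cref{dist}, and the remaining design choices, namely indexing the streams by the directions $\Gamma\rho$ and rotating $g$ by the matching $\gamma_w$ (with the inner $\gamma_w^{-1}$ essential rather than cosmetic), are precisely what make an arbitrary $\theta\in\Gamma$ permute the streams and conjugate $g$ compatibly. All remaining checks — that the net property, finite dependence, and the exponential tail survive the rotations and the $\kappa$-fold maximum — are routine given the independence in \cref{dist}(i).
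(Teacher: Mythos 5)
Your proposal is correct and follows essentially the same construction and argument as the paper: use the equivariant splitting map $F$ of \cref{dist} to produce $\kappa$ jointly independent i.i.d.\ streams, feed each one through the translation-equivariant net factor of \cref{net} after conjugating by the corresponding origin-fixing isometry, and take the site-wise maximum. The only difference is notational — you index the streams by directions $w\in\Gamma\rho$ and recover $\gamma_w\in\Gamma$ from $w$, whereas the paper indexes directly by $\gamma\in\Gamma$ via $W_\gamma(x)=W(x,x+\gamma\rho)$; the two bookkeeping schemes are related by the bijection $\gamma\mapsto\gamma\rho$, and your identity $\gamma_{\theta^{-1}w}=\theta^{-1}\gamma_w$ is exactly the index-shuffling used in the paper's verification that $J'_\gamma=\theta J_{\theta^{-1}\gamma}$.
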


\begin{proof}
  Let $U=(U(x):x\in\Z^d)$ be i.i.d.\ \un.  We will construct the required process as a factor of $U$.  First let $F$ be the map from \cref{dist} and let $W=F(U)$, so that $(W(x,y):(x,y)\in\Xi_d)$ are i.i.d.\ \un.  For each $\gamma\in\Gamma$, define $W_\gamma=(W_\gamma(x):x\in\Z^d)$ by $W_\gamma(x):=W(x,x+\gamma\rho)$.  Let $G$ be the translation-equivariant map from \cref{net} that maps an i.i.d.\ \un\ family to an $(m,m)$-net.  Now define $J_\gamma=(J_\gamma(x):x\in\Z^d)$ by
  $$J_\gamma = \gamma G(\gamma^{-1} W_\gamma).$$
  Finally, set $$J=\max_{\gamma\in\Gamma} J_\gamma.$$  Write $H$ for the map just constructed, so that $J=H(U)$.

  Clearly $J$ is a maximum of $\kappa$ $(m,m)$-nets.  Each process $J_\gamma$ is finitely dependent (by \cref{composition}), and they are independent of each other for different $\gamma$, so $J$ is finitely dependent.

  We need to show that the map $H$ is isometry-equivariant.  This is sufficiently delicate that we check it explicitly.  Let $\theta$ be an isometry of $\Z^d$, and define $U'=\theta U$.  We denote the corresponding objects $W'=F(U')$; $W'_\gamma(x)=W'(x,x+\gamma\rho)$; $J'_\gamma = \gamma G(\gamma^{-1} W'_\gamma)$ and $J'=\max_\gamma J'_\gamma$, so that $J'=H(U')$.  We need to show that $J'=\theta J$, and it is enough to do this for $\theta\in\Gamma$ and for $\theta$ a translation, since any isometry can be expressed as a composition of these.
  First note that since $F$ is isometry-equivariant, we have $W'=F(\theta U)=\theta W$.  Therefore, for any $x\in\Z^d$ and $\gamma\in\Gamma$ we have \begin{equation}W'_\gamma(x)=(\theta W)(x,x+\gamma\rho)=W(\theta^{-1}x,\theta^{-1}(x+\gamma\rho)).\label{step}\end{equation}

  Supposing that $\theta\in\Gamma$ we deduce that
  $$(\gamma^{-1}W'_\gamma)(x)=W_\gamma'(\gamma x)=W(\theta^{-1}\gamma x,\theta^{-1}\gamma x+\theta^{-1}\gamma\rho)=W_{\theta^{-1}\gamma}(\theta^{-1}\gamma x),$$
  which is to say
  $$\gamma^{-1}W'_\gamma = \gamma^{-1}\theta W_{\theta^{-1}\gamma}.$$
  Now it follows that
  $$J'_\gamma = \gamma G(\gamma^{-1}\theta W_{\theta^{-1}\gamma})=\theta \theta^{-1}\gamma G(\gamma^{-1}\theta W_{\theta^{-1}\gamma}) = \theta J_{\theta^{-1}\gamma}.$$
  Then, since $\Gamma$ is a group,
  $$J'=\max_{\gamma\in \Gamma} J'_\gamma=\theta \max_{\gamma\in \Gamma} J_{\theta^{-1}\gamma} =  \theta \max_{\gamma'\in \Gamma} J_{\gamma'}=\theta J.$$

  On the other hand, if $\theta$ is a translation then \eqref{step} implies that $W'_\gamma=\theta W_\gamma$.  Since the conjugate $\gamma^{-1}\theta\gamma$ is also a translation, and $G$ is translation-equivariant, we have
  $$J'_\gamma=\gamma G(\gamma^{-1}\theta W_\gamma)=\gamma G(\gamma^{-1}\theta\gamma\gamma^{-1} W_\gamma)=\gamma \gamma^{-1}\theta\gamma G(\gamma^{-1} W_\gamma)=\theta J_\gamma,$$
  and hence $J'=\theta J$ in this case also, as required.
\end{proof}

\section{Concluding steps}
\label{conc}

We now perform the remaining steps from (iii) to (vii) as outlined in \cref{prelim}.

\begin{prop}\label{clusters} Fix integers $d,m,k\geq 1$.  There exists a randomized isometry-equivariant block-factor on $\Z^d$ that maps any maximum of $k$ $(k m,k m)$-nets to an $(m,2k m)$-net.
\end{prop}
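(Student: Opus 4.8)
The plan is to obtain the output net as a maximal independent set, in the graph $\Z^d[m]$, of the support of the given process, constructed one connected component at a time. Write the input as $J=\mathbf 1_T$ with support $T$. Since $J$ is a maximum of $k$ processes that are $(km,km)$-nets, we have $T=T_1\cup\cdots\cup T_k$ where each $T_i$ is $km$-separated and $km$-dense; in particular $T$ is $km$-dense, although the block factor only gets to see the union $T$ and not the decomposition, which is precisely where the difficulty lies. If $S\subseteq T$ is independent in $\Z^d[m]$ then it is automatically $m$-separated, and if moreover every point of $T$ lies within distance $m$ of $S$, then every site of $\Z^d$ lies within distance $km+m\le 2km$ of $S$, so that $S$ is an $(m,2km)$-net. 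Hence it suffices to extract, as a randomized isometry-equivariant block factor, a maximal independent subset of $T$ within $\Z^d[m]$.

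The step I expect to be the main obstacle is the observation that every connected component $C$ of $T$ in $\Z^d[m]$ is bounded: it has at most some $N=N(d,m,k)$ vertices, and is contained in $v+B(k^2m)$ for each of its vertices $v$. I would prove this by first bounding the length of any simple path $v_0,\dots,v_\ell$ lying in such a component. Assign each index $j$ to a set $T_{i(j)}$ containing $v_j$. Consecutive path steps move a spatial distance at most $m$, while two points of a common $T_i$ are more than $km$ apart, so two indices with $i(a)=i(b)$ must satisfy $|a-b|\ge k+1$. Thus each of the $k$ index classes is a subset of $\{0,\dots,\ell\}$ with all gaps at least $k+1$, and a pigeonhole count comparing $\sum_i\#\{j:i(j)=i\}=\ell+1$ with the resulting cardinality bounds forces $\ell\le k^2-1$. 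Hence the graph-diameter of $C$ in $\Z^d[m]$ is at most $k^2-1$; since $\Z^d[m]$ has maximum degree $\#B(m)-1$, the component-size bound and the containment claim follow. (In the terminology of \cref{prelim} this says the colour-$1$ range-$m$ tiles of $J$ are bounded.)

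Given the boundedness, I would introduce an auxiliary i.i.d.\ \un\ field $U=(U(v):v\in\Z^d)$ and define the factor of $(J,U)$ as follows: at each site $v$, read off the component $C=C(v)$ of $v$ in $T$ within $\Z^d[m]$ — which needs only the restriction of $J$ to a ball of radius at most $k^2m$ about $v$, by the containment above (extending the rule arbitrarily on configurations where this fails, which a.s.\ do not occur) — then build a maximal independent subset $S_C$ of $C$ greedily, scanning the vertices of $C$ in increasing order of their $U$-values and admitting a vertex whenever none of its $\Z^d[m]$-neighbours has yet been admitted (ties occur with probability zero and may be broken arbitrarily), and finally output $1$ at $v$ iff $v\in S_C$. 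This is a block factor of $(J,U)$ of bounded radius, hence a randomized block factor of $J$; it is isometry-equivariant because it refers only to the isometry-invariant graph $\Z^d[m]$ and to the equivariant field $U$. Randomness is genuinely needed here: a self-symmetric component (such as a pair of points exchanged by a reflection of $\Z^d$) admits no equivariant deterministic choice of maximal independent set.

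It then remains to verify that $S=\{v:\text{output at }v\text{ equals }1\}$ is an $(m,2km)$-net, which is routine. Two distinct output sites at distance at most $m$ would be adjacent in $\Z^d[m]$ and hence lie in a common component, contradicting the independence of $S_C$; so $S$ is $m$-separated. For $2km$-density, fix a site $u$; by $km$-density of $T$ there is $t\in T$ with $|u-t|\le km$, and $t$ lies in some component $C$, where maximality of $S_C$ supplies $s\in S_C\subseteq S$ with $|t-s|\le m$, so $|u-s|\le km+m\le 2km$ since $k\ge1$. This completes the plan; the only place requiring real work is the component-boundedness claim, and the remaining steps are bookkeeping.
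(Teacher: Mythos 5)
Your proof is correct and follows essentially the same route as the paper: identify the connected components (clusters) of the support in $\Z^d[m]$, prove they are uniformly bounded using the $km$-separation of the $k$ constituent nets, and then use an auxiliary i.i.d.\ field to select sites equivariantly within each cluster. The only differences are cosmetic: the paper obtains the sharper diameter bound $(k-1)m$ (and hence block radius $km$) by taking a \emph{minimal} path between two same-index sites, which forces the interior sites to lie in pairwise-distinct nets, and it selects a single random site per cluster rather than a greedy maximal independent subset as you do — either variant yields the required $(m,2km)$-net.
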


\begin{proof}
  Let $J_1,\ldots,J_k$ be $(k m,k m)$-nets and let $J=\max_i J_i$ be their maximum.  It is also convenient to denote their respective supports by $S_i:=\{v:J_i(v)=1\}$ and $S:=\{v: J(v)=1\}=\bigcup_i S_i$.

  Define a \df{cluster} to be the vertex set of a connected component of the subgraph of $\Z^d[m]$ induced by $S$.  We claim that each cluster contains at most one site from each of the sets $S_1,\ldots,S_k$.  Suppose this fails, and let $x_0,x_1,\ldots ,x_r$ be the sites of a \emph{minimal} path in $\Z^d[m]$ connecting two sites that lie in the same set.  So without loss of generality, $x_0,x_r\in S_1$ while $x_1,\ldots,x_{r-1}\in\bigcup_{i=2}^{k} S_i$ with no two of $x_1,\ldots,x_{r-1}$ belonging to the same $S_i$.  But then $r\leq k$ and so $|x_0-x_r|\leq k m$, which is a contradiction since $J_1$ is a $(k m,k m)$-net, thus proving the claim.  In particular, each cluster has diameter at most $(k-1)m$.

%NO NEED FOR INDUCTION
%
%  Let $k\in[1,k]$ be an integer, and define a $k$-\df{cluster} to be the vertex set of a connected component of the subgraph of $\Z[m]$ induced by $\bigcup_{i=1}^k S_i$.  We claim that each $k$-cluster contains at most one site from each of $S_1,\ldots,S_k$.  This is proved by induction on $k$.  It holds for $k=1$ since $m\leq k m$ and $S_1$ has no two sites within distance $k m$.  Suppose that it holds for $k-1$ but not $k$.  There exist two elements of the same set $S_i$ that lie in the same $k$-cluster.  Consider a minimal path $x_0,x_1,\ldots x_r$ in a $k$-cluster connecting two such sites.  Without loss of generality $i=k$, so that $x_0,x_r\in S_k$ and $x_1,\ldots,x_{r-1}\in \bigcup_{i=1}^{k-1}S_i$. Since $x_2,\ldots,x_{r-1}$ lie in a single $(k-1)$-cluster, by the inductive hypothesis no two belong to the same $S_i$ ($i<k$).  Therefore $r\leq k$, so $|x_0-x_k|\leq rm\leq km\leq k m$, which is a contradiction since $J_k$ is a $(k m,k m)$-net, thus proving the claim.

   To construct the required net $I$ we now choose a random element from each cluster.  More explicitly, let $(U(i):i\in\Z^d)$ be i.i.d.\ $\mbox{Unif}[0,1]$ and independent of $J$.  For each cluster $A$, let $w=w(A)\in A$ be such that $U(w)=\max\{U(u):u\in A\}$.  For $u\in \Z^d$, let $I(v)=1$ if $v=w(A)$ for some cluster $A$, otherwise let $I(v)=0$. Write $R:=\{v: I(v)=1\}$ for the support of $I$.

  Since $R$ contains only one site from each cluster, no two sites of $R$ are within distance $m$ of each other.  On the other hand, for any site $u\in\Z^d$ there is a site $v$ of $S_1$ within distance $k m$, and there is a site $w$ of $R$ in the same cluster as $v$, so $|u-w|\leq k m+(k-1)m\leq 2k m$.  Therefore $I$ is an $(m,2k m)$-net.

  Note that $I$ can be determined from $(J,U)$ without examining the individual processes $J_1,\ldots,J_k$ (even though our arguments involve them).  Moreover, $I(v)$ can be determined by examining the cluster of $v$ and its boundary in $\Z^d[m]$, which extends to at most distance $(k-1)m+m\leq k m$.  Therefore $I$ is a $k m$-block-factor of $(J,U)$.  Isometry-equivariance is clear from the construction.
\end{proof}

\begin{prop}\label{tiling} Fix integers $d,m,b\geq 1$, and let $\kappa=\#\Gamma$ as usual.  There exists an isometry-equivariant block-factor on $\Z^d$ that maps any $(\kappa m ,b)$-net to a range-$m$ $\kappa$-bounded tiling.
\end{prop}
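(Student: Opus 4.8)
The plan is to use the net to impose a Voronoi-type partition of $\Z^d$ and then to colour each site by an isometry-invariant description of its position relative to that partition and to the nearby net points. Write $S=\{v:J(v)=1\}$ for the support of the given $(\kappa m,b)$-net $J$, so that $S$ is $\kappa m$-separated and every site of $\Z^d$ lies within distance $b$ of $S$. To each $w\in\Z^d$ I would associate its nearest net point $\phi(w)\in S$, breaking ties on the measure-zero event that it is not unique by means of an auxiliary i.i.d.\ \un\ family; the map produced is then a randomized block factor, which by the remarks of \cref{prelim} serves as well as a block factor here. Since $J$ is $b$-dense, $\phi(w)$ is determined by the values of $J$ on $B(w,b)$, so $\phi$ is a radius-$b$ isometry-equivariant block factor, and the cell $C_v:=\phi^{-1}(v)$ of each net point $v$ has diameter at most $2b$.

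Next I would colour a site $w$, lying in the cell of $v=\phi(w)$, by the $\Gamma$-orbit of the pair formed by the displacement $w-v$ and the restriction of $J$ to a ball of radius comparable to $b$ about $w$ (say $w+B(2b)$), under the natural coordinate-wise action of $\Gamma$. Passing to the orbit turns each colour into an isometry-invariant label, so this colouring is an isometry-equivariant block factor of bounded radius using only finitely many colours. The displacement coordinate records where $w$ lies inside its own cell, while the local-pattern coordinate records how the surrounding cells are arranged; both will be needed to limit how far a monochromatic region can spread.

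The substance of the proof — and the step I expect to be the main obstacle — is showing that every range-$m$ tile $T$ of this colouring has at most $\kappa$ sites. If the common colour is that of a site of $S$ then $T\subseteq S$ is $\kappa m$-separated and hence a single site; otherwise all sites of $T$ lie at one common distance $d_0\ge1$ from $S$. The decisive consequence of $\kappa m$-separation is the absence of short local periods: on any region meeting $S$, the configuration $J$ is not invariant under a nonzero translation of length at most $\kappa m$, since such a translation would carry a net point to a distinct net point at distance at most $\kappa m$. Consequently, whenever two sites at distance at most $m$ have the same colour, the local pattern of $J$ around one must be the image of that around the other under a \emph{nontrivial} element of $\Gamma$ (a translation would create a forbidden small period). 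Fixing a spanning tree of $T$ inside $\Z^d[m]$ rooted at a site $w_0$ and composing these isometries along tree-paths, one attaches to each $w\in T$ an element $\gamma_w\in\Gamma$ for which the pattern of $J$ around $w$ is the $\gamma_w$-image of that around $w_0$; in particular $\phi(w)=w+\gamma_w(\phi(w_0)-w_0)$. If two sites of $T$ carried the same group element, $J$ would be invariant under the nonzero translation between them, whose length is controlled by their distance within the tree. The delicate point — requiring a careful choice of construction and analysis beyond this sketch, and where the precise matching of $|\Gamma|=\kappa$ with the separation parameter $\kappa m$ is essential — is to arrange that this distance, and hence the resulting translation length, never exceeds $\kappa m$; separation then forces $w\mapsto\gamma_w$ to be injective on $T$, whence $|T|\le|\Gamma|=\kappa$. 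The radius of the block factor must be taken generously (on the order of $b$ together with a multiple of $\kappa m$) precisely so that enough of the net around $w_0$ remains visible from every site of $T$ for these periodicity arguments to apply.
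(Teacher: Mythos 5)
Your approach is genuinely different from the paper's, and with one small (and easy) fix it does work; but you should know that the paper's proof is considerably slicker. It defines a fixed $\Gamma$-invariant integer labelling $a:\Z^d\to\{0,1,2,\ldots\}$ by indexing the orbits $A_0,A_1,\ldots$ of $\Gamma$ in non-decreasing order of $1$-norm, sets $Y(v)=\min\{a(v-u):J(u)=1\}$, and observes that this is a \emph{deterministic} radius-$b$ isometry-equivariant block factor (no Voronoi cells, no randomization). The tile bound there is a clean pigeonhole: if $\kappa+1$ connected sites $x_1,\ldots,x_{\kappa+1}$ share the label $r$, each has a witness $u_i\in S$ with $x_i-u_i\in A_r$, and since $\#A_r\le\kappa$ two of the displacements coincide, forcing distinct net points $u_i,u_j$ with $|u_i-u_j|=|x_i-x_j|\le\kappa m$, contradicting separation.

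The ``delicate point'' you worry about is in fact not delicate, and the place you went looking for it (a generous block radius) is not where the resolution lies. The bound $|w-w'|\le\kappa m$ comes for free once you restrict attention to any connected subset of exactly $\kappa+1$ sites of the tile: a spanning tree of that subset inside $\Z^d[m]$ has $\kappa$ edges each of length at most $m$, hence diameter at most $\kappa$, so any two of the $\kappa+1$ sites are within $\kappa m$. With this observation (and taking pattern radius exactly $b$ rather than a larger multiple), your argument closes: pigeonhole on $\Gamma$ gives $w\ne w'$ with $\gamma_w=\gamma_{w'}$, whence $J(w+z)=J(w'+z)$ for all $z\in B(b)$; as the net is $b$-dense there is some $u\in S\cap(w+B(b))$, so $u+(w'-w)\in S$ too, giving two distinct net points within $\kappa m$, a contradiction. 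Note also that the Voronoi/displacement coordinate in your colour is superfluous: it is recoverable from the local pattern when the radius is at least $b$, and it plays no role in the tile bound, so dropping it eliminates the tiebreaking randomness and makes your construction a deterministic radius-$b$ block factor as well. In the end the two proofs are sibling pigeonhole arguments: yours pigeonholes on elements of $\Gamma$ and detects a coincidence through local periodicity of $J$, while the paper's pigeonholes on elements of the orbit $A_r$ and detects a coincidence through equal displacements; both yield the bound $\kappa$.
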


\begin{proof}
  We first define a fixed deterministic integer labelling $a=(a(v):v\in\Z^d)$ of $\Z^d$ as follows.  Let $A_0,A_1,A_2,\dots$  be the orbits of $\Z^d$ under the action of $\Gamma$, indexed in any non-decreasing order of the $1$-norm of an element of the orbit, so that for $u\in A_i$ and $v\in A_j$ with $i\leq j$ we have $|u|\leq|v|$.  (The norm is constant on each orbit, and we choose a fixed arbitrary ordering among orbits having equal norm).  Thus, for example, $A_0=\{0\}$, while $A_1$ consists of the $d$ standard basic vectors and their negatives, and $A_2$ and $A_3$ are equal (in either order) to $2A_1$, and the set of vectors with two nonzero coordinates each equal to $\pm1$.  Now for each $k=0,1,\ldots$ we set
$a(v) = k$ for all $v\in A_k$.

Suppose $J$ is a $(\kappa m ,b)$-net, and define $Y$ by
$$Y(v) = \min\{a(v-u): J(u)=1\}.$$
That is, each $1$ of $J$ tries to paint $\Z^d$ with a copy of $a$ centered at itself, and the smallest label (which comes from a closest $1$) wins.  This will be the required tiling.  It is clear that the map from $J$ to $Y$ is  isometry-equivariant.  Since $J$ is a $(\kappa m ,b)$-net, for every site $v$ we have $Y(v)\leq s$ where $s=\max\{a(u):|u|=b\}$.  By the same reasoning, the map is a radius-$b$ block factor.

It remains only to check that no range-$m$ tile of $Y$ has more that $\kappa$ sites.  To prove this, suppose on the contrary that distinct sites $x_1,\ldots,x_{\kappa+1}$ induce a connected subgraph of $\Z^d[m]$, where $Y(x_i)=r$ (say) for all $i$.  By the definition of $Y$ there exist $u_1,\ldots,u_{\kappa+1}$ in the support of $J$ with $x_i-u_i\in A_r$ for each $i$.  Since $\# A_r\leq \# \Gamma=\kappa$, by the pigeonhole principle there exist distinct $x_i,x_j$ with $x_i-u_i=x_j-u_j$.  But then $|u_i-u_j|=|x_i-x_j|\leq \kappa m$, contradicting the fact that $J$ is a $(\kappa m ,b)$-net.
\end{proof}

\begin{prop}\label{patchwork} Fix integers $d,m,c,q\geq 1$.  There exists a randomized isometry-equivariant block-factor on $\Z^d$ that maps any range-$m$ $c$-bounded $q$-tiling to a range-$m$ $cq$-colouring.
\end{prop}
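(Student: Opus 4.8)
The plan is to refine each tile‑colour by a local ``rank'' within the tile, supplied by extra i.i.d.\ randomness. Recall that a range‑$m$ $c$‑bounded $q$‑tiling $X$ is $[q]$‑valued and that each of its range‑$m$ tiles --- a connected component of a colour class in $\Z^d[m]$ --- has at most $c$ vertices; being connected in $\Z^d[m]$ with at most $c$ vertices, such a tile has $1$‑norm diameter at most $(c-1)m$. The output colouring takes values in $[q]\times[c]$, identified with $[cq]$ via a fixed bijection: at a site $v$ we output $(X(v),I(v))$, with $I(v)\in[c]$ the rank defined next.

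Following the definition of a randomized block factor, introduce auxiliary i.i.d.\ \un\ variables $U=(U(v):v\in\Z^d)$ independent of $X$, and build the map as a block factor of $(X,U)$. For $v\in\Z^d$ let $A(v)$ be the range‑$m$ tile of $v$, i.e.\ the connected component of $v$ in the subgraph of $\Z^d[m]$ induced by $\{u:X(u)=X(v)\}$, and set
$$I(v):=\#\bigl\{u\in A(v):U(u)\le U(v)\bigr\},$$
the rank of $U(v)$ among the a.s.\ pairwise distinct values $\{U(u):u\in A(v)\}$ (on the null event of a tie, resolve arbitrarily). Since $|A(v)|\le c$ we have $I(v)\in[c]$, and distinct sites of a common tile receive distinct values of $I$. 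Some such randomness is genuinely needed, since a tile symmetric under a subgroup of $\Gamma$ admits no equivariant deterministic ordering of its vertices.

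It remains to check three properties. \emph{Proper colouring of $\Z^d[m]$:} if $u,v$ are $\Z^d[m]$‑adjacent and $X(u)\ne X(v)$ the outputs differ in the first coordinate; if $X(u)=X(v)$ then $u,v$ lie in one tile, so $I(u)\ne I(v)$ and the outputs differ in the second. \emph{Isometry‑equivariance:} $A(v)$ and the rank defining $I(v)$ are specified intrinsically from $(X,U)$ --- via $\Z^d[m]$‑connectivity and comparison of $U$‑values, both isometry‑invariant --- so the map commutes with every isometry of $\Z^d$. \emph{Block‑factor property:} $A(v)$ equals the connected component of $v$ in the subgraph of $\Z^d[m]$ induced by $\{u\in v+B(cm):X(u)=X(v)\}$, because $A(v)\subseteq v+B((c-1)m)$, all $\Z^d[m]$‑edges internal to $A(v)$ lie in $v+B(cm)$, and no $\Z^d[m]$‑edge joins $A(v)$ to a further vertex of colour $X(v)$; hence enlarging the vertex set to $v+B(cm)$ leaves the component of $v$ unchanged. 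Thus $A(v)$, and then $I(v)$ (which needs in addition only $U$ on $A(v)\subseteq v+B(cm)$), is determined by the restriction of $(X,U)$ to $v+B(cm)$, so the map is a radius‑$cm$ block factor of $(X,U)$ --- a randomized block factor of $X$.

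I do not expect a genuine obstacle; the one point deserving care is verifying that a ball of radius $cm$ really recovers the whole tile of $v$ (so that we obtain a bona fide block factor), which is precisely where the $c$‑boundedness hypothesis enters.
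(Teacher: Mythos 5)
Your proposal is correct and matches the paper's proof essentially verbatim: augment the tile label with a random rank within the tile using auxiliary i.i.d.\ uniforms, then observe that the tile of $v$ (and hence the rank) is determined inside a ball whose radius is controlled by the $c$-boundedness. Your radius bound $cm$ is in fact slightly sharper than the paper's $(c+1)m$, but the argument is otherwise identical.
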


\begin{proof}
The idea is to assign distinct colours to each site of a tile at random.
Let $Y$ be the tiling, and let $(U(v):v\in\Z^d)$ be i.i.d.\ $\mbox{Unif}[0,1]$ and independent of $Y$.  For a site $v$, set $X(v)=(j,k)$ if $Y(v)=j$ and $U(v)$ is the $k$th largest of the variables $U(u)$ for $u$ in the same tile as $v$.  The process $X$ is clearly a colouring as required.  Since each tile of $Y$ has diameter at most $cm$, and a tile can be determined by examining it and its outer boundary in $\Z^d[m]$, the map is a radius-$(c+1)m$ block-factor.
\end{proof}

\begin{prop}[Corollary 15 of \cite{hsw}]\label{hsw} Fix $d\geq 1$.  There exists $m=m(d)<\infty$ such that for any $q$ there exists a randomized isometry-equivariant block-factor on $\Z^d$ that maps any range-$m$ $q$-colouring to a (range-$1$) $4$-colouring.
\end{prop}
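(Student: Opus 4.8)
The plan is to reconstruct the argument behind \cite[Corollary 15]{hsw}. Fix $d$ and let $m=m(d)$ be a large constant to be specified. Given a range-$m$ $q$-colouring $X$ of $\Z^d$, first apply the isometry-equivariant block factor of \cref{net-factor} to obtain an $(m,m)$-net $J$ with support $S$, and adjoin an independent i.i.d.\ \un\ family $U=(U(v):v\in\Z^d)$. Since $S$ is a maximal independent set of $\Z^d[m]$, every site lies within $1$-distance $m$ of $S$ while distinct points of $S$ are at distance greater than $m$. For each $v$ let $s(v)\in S$ be the point of $S$ closest to $v$, breaking ties in favour of the largest value of $U$; this partitions $\Z^d$ into cells $C_s:=\{v:s(v)=s\}$, each contained in a ball of radius $m$ about $s$ and hence of diameter at most $2m$, and the map $v\mapsto s(v)$ is an isometry-equivariant block factor of $(X,U)$ of radius $O(m)$.

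Next define the \emph{bulk colouring} $Y^0(v):=1+\bigl(\sum_i v_i+\sum_i s(v)_i\bigr)\bmod 2\in\{1,2\}$. One checks that $Y^0$ is isometry-equivariant: under a translation the parity contributions from $v$ and from $s(v)$ each shift and cancel mod $2$, and under a coordinate permutation or negation the two sums are preserved mod $2$. Moreover $Y^0$ restricted to any single cell $C_s$ agrees with the parity colouring up to a constant, hence is a proper $2$-colouring there. Call an edge $\{v,w\}$ of $\Z^d$ \df{bad} if $Y^0(v)=Y^0(w)$; a short computation shows that a bad edge must join two cells $C_s,C_{s'}$ with $\sum_i s_i\not\equiv\sum_i s'_i\pmod 2$, and that if this parity condition holds then \emph{every} edge between $C_s$ and $C_{s'}$ is bad. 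Thus all bad edges lie within a bounded neighbourhood of the codimension-one interfaces of the cell partition.

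To repair we recolour a bounded-width seam along each bad interface using the two spare colours $\{3,4\}$. For a bad pair $C_s,C_{s'}$, recolour the layer of sites of $C_{s'}$ adjacent to $C_s$ (and not the corresponding layer of $C_s$) precisely when $U(s')>U(s)$, replacing a bulk colour $c\in\{1,2\}$ of such a site by $c+2\in\{3,4\}$. Away from the bounded regions where three or more cells come close together, this already produces a proper $4$-colouring: an edge with both endpoints recoloured has them in a common cell, so their bulk colours differ and hence so do the shifted colours; an edge from a recoloured site to anything else is proper since $\{3,4\}$ and $\{1,2\}$ are disjoint; and any remaining edge is not bad. The delicate point is what happens near the finitely many local patterns in which several cells meet: there a site can be eligible to be recoloured on account of two different interfaces, with a bad edge running between two such sites, creating a clash inside $\{3,4\}$. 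These ``corner regions'' have diameter bounded in terms of $d$ and are pairwise at distance $\Omega(m)$, so one repairs each of them by a fixed local recolouring into $[4]$, selected by a finite case analysis from the combinatorics of the cells meeting there together with the relevant $U$-values. That such a recolouring always exists — this is the step that forces $m$ to be a (large) function of $d$, large enough that the corner regions are genuinely isolated and the participating cells are sizeable — is a finite check. Composing all of these maps and invoking \cref{composition} then gives a randomized isometry-equivariant block factor of $X$ of radius $O(m)$ whose output is a proper range-$1$ $4$-colouring.

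I expect the corner analysis to be the main obstacle: designing the local recolourings near the meeting points of several cells so that four colours always suffice and the patches mesh with the generic seam repair on the adjacent interfaces. Everything else — extracting the net, forming the Voronoi cells, the parity bulk colouring and its equivariance, and the generic seam step — is routine once that finite combinatorial fact, and with it the explicit constant $m(d)$, is in hand.
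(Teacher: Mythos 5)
The paper does not prove this proposition; it is cited verbatim from Corollary~15 of \cite{hsw}, and the only proof-level information given in the present paper is the quantitative remark at the end of \cref{conc} that the argument of \cite{hsw} proceeds in ``two stages, each involving applying $q$ successive radius-$m$ block factors,'' yielding a block factor of radius on the order of $mq$. Your reconstruction is a genuinely different one-pass Voronoi construction whose claimed radius $O(m)$ is independent of $q$; the mismatch with the $q$-dependent radius reported for \cite{hsw} is already a warning sign, since if your construction worked it would give a strictly stronger and simpler statement.

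The substantive gap is the ``corner analysis,'' which you flag yourself and then dismiss as a ``finite check.'' It is not. The set of sites of a cell $C_{s'}$ eligible for two different seam recolourings is generically a codimension-$2$ set of the partition, not a bounded-diameter blob: in $\Z^3$, for example, the locus of sites in $C_{s'}$ adjacent to both $C_s$ and $C_{s''}$ typically runs along a one-dimensional ridge whose length is $\Theta(m)$. Even in $d=2$, a single cell has many corner points along its boundary at mutual distance $O(m)$, so they are not pairwise separated by $\Omega(m)$. Both structural claims you need therefore fail, and there is no finite catalogue of local corner patterns to case-analyse: which cells meet along a given ridge, their relative $U$-values, and the geometry of the overlapping seam layers all vary with $m$ in an unbounded way. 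Without a controlled description of these interface regions the repair step is not well defined, and it is not established that four colours suffice. As written the proposal does not constitute a proof; it outlines a plausible-looking programme and leaves its hardest step as an assertion.
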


\begin{proof}[Proof of \cref{main}]
Let $m=m(d)$ be as in \cref{hsw}.  Consider a maximum of $\kappa$ $(\kappa m,\kappa m)$-nets that is finitely dependent and isometry-equivariant exponentially ffiid as provided by \cref{max}.  Then apply \cref{clusters,tiling,patchwork,hsw} in sequence to obtain a $(\kappa m,2\kappa^2 m)$-net, a range-$m$ bounded tiling, a range-$m$ colouring, and finally a $4$-colouring, which has the required properties by \cref{composition}.
\end{proof}

\subsection*{Bounding the dependence distance}
We briefly explain how to obtain an explicit bound on the dependence distance in the colouring of \cref{main}.  As mentioned earlier, the bound is very large. It would be of interest to improve it to something more reasonable.

From the proofs in \cite{hsw} we have following quantitative version of \cref{hsw}.  We can take $m=70d\cdot 10^d$; then given a range-$m$ $q$-colouring, there is a radius-$mq$ block factor mapping it to a $4$-colouring.  (To extract this information from the proof of Theorem 1(i) of \cite{hsw}, in the notation of that paper we can take $C\leq 5^d/(1/2)^d$ by the proof of Lemma 12 of \cite{hsw}, and then $M=14dC+1$ and $m\geq 4M+3$.  Then the construction of the $4$-colouring has two stages, each involving applying $q$ successive radius-$m$ block factors).

\cref{trans-col} then gives a $\kappa m$-dependent range-$\kappa m$ $q$-colouring where $q=5^v$ and $v\leq \# B(\kappa m)\leq (3\kappa m)^d\leq (200d)^{d^2}\leq d^{6d^2}$, and \cref{net-factor} gives a $(\kappa m,\kappa m)$-net via a radius-$q\kappa m$ block factor, adding $2q\kappa m$ to the dependence distance by \cref{composition}(iii).  The subsequent steps involve block factors of radii $d$, $\kappa^2m$, $2\kappa^2 m$, $(3\kappa^2 m)^d$, $(\kappa+1)m$, and $m\kappa (3\kappa^2 m)^d$, all of which are relatively insignificant compared with $q$, and so can be accounted for by increasing the bottom constant in the bound $q\leq 5^{d^{6 d^2}}$.  Therefore the final $4$-colouring of \cref{main} is $k$-dependent where $k=6^{d^{d^2}}$.

The bound on $k$ can be reduced somewhat if we require only a colouring (rather than a $4$-colouring).  However, it is still at least exponential in $d!$, and very large even when $d=2$. 

\section*{Open Questions}

\begin{enumerate}
  \item Does there exist an isometry-invariant $1$-dependent colouring of $\Z^2$ (or indeed of $\Z^d$ for any $d\geq 2$)?  Recall that the answer becomes yes on $\Z$, or if we weaken the requirement to translation-invariance, or (as proved here) if we weaken $1$-dependence to $k$-dependence for large enough $k$.
  \item Does there exist an automorphism-invariant finitely dependent colouring of a $3$-regular tree (or a $d$-regular tree for any $d\geq 3$)?  Similarly to $\Z^d$, it is not difficult to obtain a finitely dependent colouring that is invariant under \emph{some} transitive group of automorphisms (see \cite{hl}), but the symmetrization method of this article no longer works because the stabilizer of a vertex is now infinite (in fact, uncountable).  See \cite{star} for some potential first steps on this question.
  \item Does there exist an ffiid finitely dependent colouring of $\Z$ (or indeed of $\Z^d$) with strictly faster than exponential tail decay of the coding radius?  Since a block factor is impossible this might seem somewhat unlikely, but note that without the requirement of finite dependence, the best decay for an ffiid colouring is a tower function -- i.e., $\P(R>r)<1/\exp\cdots \exp 1$ where the exponential is iterated $c r$ times (see \cite{hsw}).
  \item Does there exist a translation-invariant simple point process on $\R$ that is finitely dependent in the sense that its restrictions to $(-\infty,0]$ and $[c,\infty)$ are independent for some $c$, and such that almost surely each gap between two consecutive points lies in a fixed interval $[a,b]$?  This question was suggested by Ken Alexander (personal communication).  Such a process is the continuum analogue of a finitely dependent net, but the obvious discretization approach seems to fail.
      % because the dependence distance does not remain bounded in the continuum limit.
      Similar questions are possible in higher dimensions.
\end{enumerate}

\newpage
\section*{Acknowledgments}

I thank Sebastian Brandt, Tom Hutchcroft and Avi Levy for valuable discussions. In particular, Sebastian Brandt suggested the idea behind the proof of \cref{tiling}.  Key parts of this work were conducted at an excellent workshop of the Oberwolfach Research Institute organised by Jan Greb\'ik, Oleg Pikhurko and Anush Tserunyan.  I thank the institute and the organisers.

\bibliographystyle{abbrv}
\bibliography{isom}

\end{document}